\numberwithin{equation}{section}
\title{\bf\Large Optimal pointwise estimates for derivatives of solutions to Laplace, Lam\'e
and Stokes equations}
\date\ }
\newtheorem{lemma}{Lemma}
\newtheorem{theorem}{Theorem}
\newtheorem{proposition}[theorem]{Proposition}
\newtheorem{corollary}{Corollary}
\newcommand{\bs}{\boldsymbol}
\newenvironment{remark}{{\bf Remark}}
\begin{document}
\maketitle
\vspace{-2cm}
\centerline{\scshape Gershon Kresin}
\medskip
{\footnotesize
 \centerline{Department of Computer Science and Mathematics}
   \centerline{Ariel University}
   \centerline{44837 Ariel, Israel}
   \centerline{kresin@ariel.ac.il}} 
\medskip
\centerline{\scshape Vladimir Maz'ya}
\medskip
{\footnotesize
 \centerline{Department of Mathematical Sciences}
   \centerline{University of Liverpool, M$\&$O Building}
   \centerline{Liverpool, L69 3BX, UK}
\medskip
 \centerline{Department of Mathematics}
   \centerline{Link\"oping University}
   \centerline{SE-58183 Link\"oping, Sweden}
   \centerline{vladimir.mazya@liu.se}}
\bigskip

\bigskip   
{\bf Abstract.} Various optimal estimates for solutions of the Laplace, 
Lam\'e and Stokes equations in multidimensional domains, as well as new real-part 
theorems for analytic functions are obtained. 
\\
\\
{\bf 2000 MSC.} Primary: 35B30, 30A10; Secondary: 35J05, 30H10
\\
\\ 
{\bf Keywords:} pointwise estimates, limit estimates, 
derivatives of analytic and harmonic functions,
directional derivative and divergence of a vector field, 
elastic displacement field, pressure in a fluid
 
\setcounter{section}{0} 
\setcounter{equation}{0}
\section{Introduction}

In the present paper we extend our study of the best constants in certain inequalities for
solutions of the Laplace, Lam\'e and Stokes equations (see Kresin and Maz'ya \cite{KM15}).
We also deal with optimal estimates for analytic functions in the spirit of
our recent article  \cite{KM14}. Let us formulate some results obtained in the sequel.

By $|\cdot|$ we denote the Euclidean length of a vector or absolute 
value of a scalar quantity. Let $\Omega$ be a domain in ${\mathbb R}^n$.
By $d_x$ we mean the distance from a point $x\in \Omega$ to $\partial \Omega$ 
and by $\omega_n$ we denote the area of the $(n-1)$-dimensional unit sphere.

One of the results derived in Section 2 is the following pointwise estimate 
of the gradient of a bounded harmonic function in the complement $\Omega$ of a convex
closed domain in ${\mathbb R}^n$: {\it
$$
\big |\nabla u(x)\big |\leq {C_n \over d_x}\;\sup_\Omega |u|  
$$
for all $x\in \Omega$. Here
$$
C_n=\frac{4(n-1)^{(n-1)/2}\;\omega_{n-1}}{ n^{n/2}\;\omega_n}
$$
is the best constant.}

\smallskip
We also state here a limit estimate with the same best constant $C_n$, valid  for 
arbitrary domains which is established in Section 2. 

{\it Let $\Omega$ be a domain in ${\mathbb R}^n$,
and let ${\mathfrak U}(\Omega)$ be the set of harmonic functions $u$ in $\Omega$
with $\sup_\Omega |u|\leq 1$.
Suppose that a point $\xi \in \partial\Omega$ can be touched by an interior ball $B$. Then
$$
\limsup _{x\rightarrow \xi}\sup_{u\in {\mathfrak U}(\Omega)}
|x-\xi|\big |\nabla u(x)\big | \leq C_n\;, 
$$
where $x$ is a point of the radius of $B$ directed from the center to $\xi$.}

\smallskip
In Section 3 we obtain pointwise estimates 
for the directional derivative $(\bs \ell, \nabla ){\bs u}$,
where $\bs u(x)=(u_{_1}(x),\dots,u_m(x))$ is a vector field whose components are
harmonic in $\Omega$. Assertions proved here are generalizations of the theorems given
in Section 2. In Section 4 we present analogs of the theorems of Section 2
containing pointwise and limit estimates for $|{\rm div}\;\bs u(x)|$, $m=n$.

\smallskip 
By $[{\rm C}_{\rm b}({\overline\Omega})]^n$ 
we mean the space of vector-valued functions
with $n$ components which are bounded and continuous on ${\overline\Omega}$.
This space is endowed with the norm $||\bs u||_{[{\rm C}_{\rm b}({\overline\Omega})]^n}
=\sup \{|\bs u(x)|: x \in {\overline\Omega}) \}$. By $[{\rm C}^{2}(\Omega)]^n$
we denote the space of $n$-component vector-valued functions with continuous
derivatives up to second order in $\Omega$. 

\smallskip 
Next, in Section 5 we find an optimal estimate for $|{\rm div}\;\bs u(x)|$, where $\bs u$
is an elastic displacement vector in ${\mathbb R}^n_+=\{ x=(x', x_n):
x'\in {\mathbb R}^{n-1}, x_n>0 \}$. 
As a corollary, we obtain an optimal estimate for the pressure $p$ in a viscous 
incompressible fluid in ${\mathbb R}^n_+$. We formulate two statements following 
from these results.

\smallskip
{\it Let $\Omega={\mathbb R}^n\backslash {\overline G}$, 
where $G$ is a convex domain in ${\mathbb R}^n$. 

{\rm (i)} Let $\bs u\in [{\rm C}^{2}(\Omega)]^n 
\cap [{\rm C}_{\rm b}(\overline {\Omega})]^n$ be a solution of the Lam\'e system
$$
\Delta \bs u +(1-2\sigma)^{-1}\;{\rm grad}\;{\rm div}\; \bs u=\bs 0
$$ 
in $\Omega$, where $ \sigma \in (-\infty, 1/2)\cup(1, +\infty)$ is the Poisson coefficient.
Then for any point $x\in \Omega$ the inequality
$$
\left |{\rm div}\;\bs u (x)\right |\leq {(1-2\sigma)E_n \over (3-4\sigma)d_x}\;
\sup_\Omega |\bs u|  
$$
holds, where
$$
E_n={4 \omega_{n-1} \over \omega_n}\int_0^{\pi/2}
\big [1+n(n-2)\cos^2\vartheta \big ]^{1/2}\sin^{n-2}\vartheta d\vartheta
$$
is the best constant.}

{\rm (ii)} {\it Let $\bs u\in [{\rm C}^{2}(\Omega)]^n \cap [{\rm C}_{\rm b}(\overline {\Omega})]^n$ be a vector component of the solution $\{ \bs u, p \}$ to the Stokes system 
$$
\Delta \bs u -{\rm grad}\; p=\bs 0\;,\;\;\;\;{\rm div}\;\bs u=0\;\;\mbox{in}
\;\;\Omega
$$
and let $p(x)$ be the pressure vanishing as $d_x\rightarrow\infty$. Then for 
any point $x\in \Omega$ the inequality
$$
|p(x)|\leq {E_n \over d_x}\;\sup_\Omega |\bs u|  
$$
holds with the same best constant $E_n$ as above.}

\smallskip
The last Section 6 is dedicated to some new real-part theorems for analytic functions
(see Kresin and Maz'ya \cite{KM} and the bibliography collected there). We derive
the following results.

\smallskip
{\rm (i)} {\it Let 
$\Omega={\mathbb C}\backslash {\overline G}$, where $G$ is a convex domain in ${\mathbb C}$,
and let $f$ be a holomorphic function in $\Omega$ with bounded real part.
Then for any point $z\in \Omega$ the inequality
$$
\big |f^{(s)}(z)\big |\leq {K_s \over d_z^s}\sup_\Omega |\Re f|\;,\;\;\;\;\;\;\;\;s=1,2,\dots\;, 
$$
holds with $d_z={\rm dist}\;(z, \partial \Omega)$, where
$$
K_s={s! \over \pi}\max_{\alpha}\int_{-\pi/2}^{\pi/2} \big |\cos \big (\alpha +(s+1)
\varphi \big ) \big |\cos ^{s-1}\varphi\;d\varphi
$$
is the best constant. In particular $K_{2l+1}=2[(2l+1)!!]^2[\pi(2l+1]^{-1}$.} 

\smallskip
{\rm (ii)} {\it Let $\Omega$ be a domain in ${\mathbb C}$,
and let ${\mathfrak R}(\Omega)$ be the set of holomorphic functions $f$ in $\Omega$
with $\sup_\Omega|\Re f|\leq 1$.
Assume that a point $\zeta \in \partial\Omega$ can be touched by an interior disk $D$. Then
$$
\limsup _{z\rightarrow \zeta} \sup_{f\in {\mathfrak R}(\Omega)}|z-\zeta|^s\big |f^{(s)}(z)\big | \leq K_s\;,\;\;\;\;\;\;\;\;s=1,2,\dots\;, 
$$
where $z$ is a point of the radius of $D$ directed from the center to $\zeta$.
Here the constant $K_s$ is the same as above and cannot be diminished.}

\smallskip
More details concerning the above formulations can be found in the statements of
corresponding theorems, propositions and corollaries in what follows.

\setcounter{equation}{0}
\section{Estimates for the gradient of harmonic function}

We introduce some notation used henceforth. 
Let ${\mathbb B} =\big \lbrace x \in {\mathbb R}^{n}: |x|<1 \big \rbrace$,
${\mathbb B}_R =\big \lbrace x \in {\mathbb R}^{n}: |x|<R \big \rbrace$, 
and ${\mathbb S}^{n-1}=\{ x \in {\mathbb R}^{n}: |x|=1 \}$.  
By $h^\infty(\Omega)$ we denote  the Hardy space of bounded harmonic functions
on the domain $\Omega$ with the norm $||u||_{h^\infty(\Omega)} =\sup \{ | u(x) |: 
x\in \Omega \}$. 

\begin{theorem} \label{T_0} 
Let $\Omega={\mathbb R}^n\backslash {\overline G}$, where $G$ is a 
convex domain in ${\mathbb R}^n$, and let $u$ be a bounded harmonic function 
in $\Omega$. Then for any point $x\in \Omega$ the inequality
\begin{equation} \label{Eq_1.AG}
\big |\nabla u(x)\big |\leq {C_n \over d_x}\;\sup_\Omega |u|  
\end{equation}
holds, where
\begin{equation} \label{Eq_1.AB}
C_n=\frac{4(n-1)^{(n-1)/2}\;\omega_{n-1}}{ n^{n/2}\;\omega_n}
\end{equation}
is the best constant in the inequality
$$
|\nabla u(x)|\leq C_n\;x_n^{-1}\;
||u||_{L^\infty(\partial{\mathbb R}^{n} _{+})} 
$$
for a bounded harmonic function $u$ in the half-space ${\mathbb R}^{n} _{+}$.

In particular, 
$$
C_2={2\over \pi}\;,\;\;\;\;\;\;\;\;\;\;\;\;\;C_3={4\over 3\sqrt{3}}\;.
$$
\end{theorem}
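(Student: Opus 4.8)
The plan splits into two parts: first establish the sharp half-space estimate with constant $C_n$, and then transfer it to the exterior domain $\Omega = \mathbb{R}^n \setminus \overline{G}$ by a localization-and-convexity argument.
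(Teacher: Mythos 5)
Your two-step plan is exactly the route the paper takes: a sharp half-space inequality plus a reduction of the exterior-of-a-convex-body case to the half-space case via a supporting hyperplane. However, as written the proposal is only an outline, and the entire mathematical content is missing at both steps. The first step, ``establish the sharp half-space estimate with constant $C_n$,'' is where all the work lies: the identification of the best constant as $C_n=4(n-1)^{(n-1)/2}\omega_{n-1}/(n^{n/2}\omega_n)$ requires computing
$$
\sup_{|\bs\ell|=1}\frac{1}{\omega_n}\int_{{\mathbb S}^{n-1}}\big|\big(\bs e_n-n(\bs e_\sigma,\bs e_n)\bs e_\sigma,\,\bs\ell\big)\big|\,d\sigma
$$
and showing the supremum is attained and evaluates to the stated closed form; the paper does not reprove this here but cites Theorem 1 of \cite{KM12} for $n\ge 3$, and for $n=2$ it instead invokes the real-part estimate $|f'(z)|\le \frac{2}{\pi\,\Im z}\sup_{{\mathbb C}_+}|\Re f|$ applied to $f=u+iv$ with $f'=u'_x-iv'_y$. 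You give no indication of how you would obtain the constant, so the claim that $C_n$ is the \emph{best} constant in the half-space inequality is unsupported.

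The second step also needs to be made precise rather than named. The correct execution is: let $\xi\in\partial\Omega$ be the point nearest to $x$, let $T(\xi)$ be the hyperplane through $\xi$ orthogonal to the segment $x\xi$; since $\overline G$ is convex, $T(\xi)$ is a supporting hyperplane of $\overline G$, so the open half-space ${\mathbb R}^n_\xi$ on the side of $x$ is contained in $\Omega$, and $\mathrm{dist}(x,T(\xi))=|x-\xi|=d_x$. Restricting $u$ to ${\mathbb R}^n_\xi$, the half-space estimate gives $|\nabla u(x)|\le C_n d_x^{-1}\|u\|_{h^\infty({\mathbb R}^n_\xi)}$, and $\|u\|_{h^\infty({\mathbb R}^n_\xi)}\le\sup_\Omega|u|$ completes the argument. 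Your phrase ``localization-and-convexity argument'' points at this, but without the supporting-hyperplane observation and the identification of $d_x$ with the distance to $T(\xi)$ the transfer is not actually carried out. In short: right strategy, but no proof yet.
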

\begin{proof} Let $\xi \in \partial\Omega$ be a point at $\partial\Omega$ nearest to
$x \in \Omega$ and let $T(\xi)$ be the hyperplane containing $\xi$ and orthogonal 
to the line joining $x$ and $\xi$. By ${\mathbb R}^n_\xi$ we denote the open half-space 
with boundary $T(\xi)$ such that ${\mathbb R}^n_\xi\subset\Omega $.

Let $n\geq 3$. According to Theorem 1 \cite{KM12}, 
the inequality
\begin{equation} \label{Eq_1.ABC}
|\nabla u(x)|\leq {C_n\over d_x}\;||u||_{h^\infty({\mathbb R}^n_\xi)}
\end{equation}
holds, where $C_n$ is given by (\ref{Eq_1.AB}). Using (\ref{Eq_1.ABC}) and the 
obvious inequality
$$
||u||_{h^\infty({\mathbb R}^n_\xi)}\leq \sup_\Omega |u|\;,
$$
we arrive at (\ref{Eq_1.AG}).

The case $n=2$ is considered analogously, the role of (\ref{Eq_1.ABC}) being 
played by the estimate 
\begin{equation} \label{Eq_HP}
|f'(z)|\leq {2 \over \pi \Im z}\sup_{{\mathbb C}_+} |\Re f|
\end{equation}
(see \cite{KM}, Sect. 3.7.3)
by the change $f=u+iv$, $f'(z)=u'_x-iv'_y$, where $f$ is a holomorphic function in 
${\mathbb C}_+=\{z\in {\mathbb C}: \Im z>0 \}$ with bounded real part.
\end{proof}

\smallskip
In what follows, we assume that the Cartesian coordinates with origin 
${\mathcal O}$ at the center of the ball are chosen in such a way that 
$x=|x|\bs e_n$. By $\bs\ell$ we denote an arbitrary unit vector in ${\mathbb R}^{n}$ 
and by $\bs \nu_{\!x}$ we mean the unit vector of exterior normal to the sphere 
$|x|=r$ at a point $x$. Let $\bs\ell_\tau$ be the orthogonal 
projection of $\bs\ell$ on the tangent hyperplane to the sphere $|x|=r$ at $x$. 
If $\bs\ell_\tau\neq \bs 0$, we set $\bs \tau_{\!\!x}=\bs\ell_\tau /|\bs\ell_\tau |$, 
otherwise $\bs \tau_{\!\!x}$ is an arbitrary unit vector tangent to the
sphere $|x|=r$ at $x$. Hence 
\begin{equation} \label{Eq_2.1.1S}
\bs \ell=\ell _{\tau}\bs \tau_{\!\!x}+\ell _{\nu}\bs \nu_{\!x},
\end{equation} 
where $\ell _{\tau}=|\bs\ell_\tau |$ and $\ell _{\nu}=(\bs\ell, \bs \nu_{\!x})$. 

We premise Lemmas 1 and 2 to Theorem 2. 
In Lemma 1 we derive a representation for the sharp coefficient ${\mathcal K}_n(x)$
in the inequality
\begin{equation} \label{Eq_2.1.1}
|\nabla u(x)|\leq {\cal K}_n( x)||u||_{L^\infty({\partial\mathbb B})}\;,
\end{equation}
where $x\in {\mathbb B}$ and $u\in h^\infty({\mathbb B})$. {\sl Here and elsewhere 
we say that a certain coefficient is sharp if it cannot be diminished for any 
point $x$ in the domain under consideration}.
The expression for ${\mathcal K}_n( x)$, 
given below, contains two factors one of which is an explicitely given function
increasing to infinity as $r \rightarrow 1$ and the second factor (the double integral) 
is a bounded function on the interval $0\leq r\leq 1$.

\setcounter{theorem}{0}
\begin{lemma} \label{P_1}
Let $u \in h^\infty({\mathbb B})$, and let $x $ be an arbitrary point in ${\mathbb B}$. The sharp 
coefficient ${\cal K}_n(x)$ in inequality $(\ref{Eq_2.1.1})$ is given by
\begin{equation} \label{Eq_2.1.2}
{\mathcal K}_n(x)={ 2^{n-2}(n-2) \over \pi(1+r)^{n-1}(1-r)}
\sup _{\gamma \geq 0}{1\over \sqrt{1+\gamma^2}} \int_0^{\pi}\sin ^{n-3}\varphi  \;d\varphi
\int_0^{\pi/2} G_{n}(\vartheta, \varphi ; r, \gamma)\;d\vartheta \;,
\end{equation} 
where 
\begin{equation} \label{Eq_2.1.3}
G_{n}(\vartheta, \varphi ; r, \gamma)={\big | n\cos 2 \vartheta+
n\gamma\sin 2\vartheta\cos \varphi +(n-2)r \big | 
\over \left [1+\left ({1-r \over 1+r} \right )^2 \tan^2 \vartheta \right ]^{(n-2)/2}}
\sin^{n-2}\vartheta\;.
\end{equation} 
\end{lemma}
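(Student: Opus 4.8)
\emph{Plan.} The starting point is the Poisson representation. For $u\in h^\infty(\mathbb B)$ I write
$$u(x)=\int_{\partial\mathbb B}P(x,\sigma)\,u(\sigma)\,d\sigma,\qquad P(x,\sigma)=\frac{1-|x|^2}{\omega_n\,|x-\sigma|^n},$$
the boundary values being taken in the $L^\infty$ sense, so that $\sup\{|u(x)|:x\in\mathbb B\}=\|u\|_{L^\infty(\partial\mathbb B)}$. Since $x$ is interior I may differentiate under the integral sign, so that for any unit vector $\bs\ell$
$$(\bs\ell,\nabla u(x))=\int_{\partial\mathbb B}\big(\bs\ell,\nabla_xP(x,\sigma)\big)\,u(\sigma)\,d\sigma .$$
As a functional of $u\in L^\infty(\partial\mathbb B)$ the right-hand side has norm $\int_{\partial\mathbb B}|(\bs\ell,\nabla_xP(x,\sigma))|\,d\sigma$, attained in the limit by the admissible datum $u(\sigma)=\mathrm{sgn}\,(\bs\ell,\nabla_xP(x,\sigma))$; and since $|\nabla u(x)|=\max_{|\bs\ell|=1}(\bs\ell,\nabla u(x))$, the sharp coefficient is
$${\cal K}_n(x)=\max_{|\bs\ell|=1}\int_{\partial\mathbb B}\big|\big(\bs\ell,\nabla_xP(x,\sigma)\big)\big|\,d\sigma .$$

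The next step is to exploit the rotational symmetry about the axis through $x$. I fix $x=r\bs e_n$ and, by invariance, take $\bs\ell$ in the $\bs e_1\bs e_n$-plane, so that $\bs\nu_{\!x}=\bs e_n$ and $\bs\tau_{\!\!x}=\bs e_1$ in the decomposition $(\ref{Eq_2.1.1S})$. Putting $\gamma=\ell_\tau/\ell_\nu\ge0$ and $\ell_\nu=(1+\gamma^2)^{-1/2}$ factors the length of $\bs\ell$ out of the integral and turns the maximum over $\bs\ell$ into the factor $(1+\gamma^2)^{-1/2}$ together with the supremum over $\gamma\ge0$. I then introduce spherical coordinates on $\partial\mathbb B$ with polar angle measured from $\bs e_n$ and azimuthal angle $\varphi$ measured from $\bs e_1$. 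Because $(\bs\ell,\nabla_xP)$ depends on $\sigma$ only through its $\bs e_n$- and $\bs e_1$-components, the remaining $n-3$ angular variables can be integrated out, which produces the weight $\sin^{n-3}\varphi\,d\varphi$ (hence the restriction $n\ge3$, the case $n=2$ being covered separately in Theorem \ref{T_0}) and an overall constant equal to the area of the corresponding latitude sphere.

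The heart of the proof is the explicit evaluation of the resulting two-dimensional integral. Differentiating the kernel gives
$$\big(\bs\ell,\nabla_xP\big)=\frac{-2(\bs\ell,x)\,|x-\sigma|^2-n(1-|x|^2)(\bs\ell,x-\sigma)}{\omega_n\,|x-\sigma|^{n+2}} .$$
I then pass from the polar angle to a new variable $\vartheta\in[0,\pi/2]$ by a change adapted to the two extreme distances $1-r$ and $1+r$ from $x$ to $\partial\mathbb B$, whose ratio $(1-r)/(1+r)$ is precisely the quantity entering $G_n$. After this change the denominator is governed by $(1+r)^2\cos^2\vartheta+(1-r)^2\sin^2\vartheta=(1+r)^2\cos^2\vartheta\,[1+((1-r)/(1+r))^2\tan^2\vartheta]$, producing the factor $[1+((1-r)/(1+r))^2\tan^2\vartheta]^{(n-2)/2}$ and the weight $\sin^{n-2}\vartheta$, while the numerator collapses, once $\ell_\tau=\gamma\ell_\nu$ is inserted, to $|n\cos2\vartheta+n\gamma\sin2\vartheta\cos\varphi+(n-2)r|$. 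All the scalar constants, the Jacobian of the substitution and the factor $(1+\gamma^2)^{-1/2}$ assemble into the stated prefactor, giving exactly $(\ref{Eq_2.1.2})$--$(\ref{Eq_2.1.3})$. I expect the \emph{main obstacle} to be this algebraic collapse: one must keep the absolute value intact, so that the supremum over $\gamma$ is genuinely over the worst direction, and at the same time reconcile the power $|x-\sigma|^{-(n+2)}$ coming from $\nabla_xP$ with the exponent $(n-2)/2$ appearing in $G_n$. This reconciliation is what pins down the precise (nonlinear) form of the angular substitution, and it is the step in which the redistribution of the powers of $|x-\sigma|$ by the Jacobian and by $\sin^{n-2}\psi$ must be tracked with care.

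Finally I would confirm sharpness in the sense fixed just before the lemma, namely that ${\cal K}_n(x)$ cannot be diminished at any interior $x$. This follows because the extremal datum $\mathrm{sgn}\,(\bs\ell_\ast,\nabla_xP(x,\cdot))$, for the maximizing direction $\bs\ell_\ast$, is bounded by $1$ and can be approximated in $L^1(\partial\mathbb B)$ by continuous boundary data, so the value ${\cal K}_n(x)$ is approached as closely as one wishes. A quick check at $r=0$, where $(\bs\ell,\nabla_xP)|_{x=0}=n(\bs\ell,\sigma)/\omega_n$ and the supremum over $\gamma$ is attained at $\gamma=0$, reproduces ${\cal K}_n(0)=2n\,\omega_{n-1}/[(n-1)\omega_n]$ and serves as a useful consistency test for the constants.
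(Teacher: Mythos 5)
Your plan follows essentially the same route as the paper's proof: the sharp coefficient is identified as the $L^{1}(\partial\mathbb B)$-norm of $(\bs\ell,\nabla_xP(x,\cdot))$ maximized over $\bs\ell$, axial symmetry reduces it to a double integral with weight $\sin^{n-3}\varphi$, the substitution $\theta=2\arctan\bigl(\frac{1-r}{1+r}\tan\vartheta\bigr)$ produces the factor $\bigl[1+(\frac{1-r}{1+r})^{2}\tan^{2}\vartheta\bigr]^{(n-2)/2}$ and the numerator $|n\cos2\vartheta+n\gamma\sin2\vartheta\cos\varphi+(n-2)r|$, and $\gamma=\ell_\tau/\ell_\nu$ absorbs the maximum over directions. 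The only cosmetic difference is that the paper reaches the $(\theta,\varphi)$ integral by projecting the two hemispheres onto the equatorial ball and citing an integral identity rather than by integrating out the extra spherical angles directly, and your consistency check at $r=0$ agrees with the formula.
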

\begin{proof}  {\it 1. Representation for ${\cal K}_n( x)$ by an 
integral over ${\mathbb S}^{n-1}$}. Let $u$ stand for a harmonic function in ${\mathbb B}$
from the space $h^\infty({\mathbb B})$. By Poisson formula we have  
\begin{equation} \label{Eq_2.2} 
u(x)={1 \over \omega _n}\int _{{\mathbb S}^{n-1}}{1-r^2 \over |y-x|^n}u(y)d\sigma_y\;.
\end{equation}

Fix a point $x \in {\mathbb B}$. By (\ref{Eq_2.2})
$$
{\partial u \over \partial x_i}={1 \over \omega _n }\int _{{\mathbb S}^{n-1}}
\left [ {-2x_{i} \over |y-x|^n} + 
{n \left ( 1-r^2 \right ) (y_i-x_i)\over |y-x|^{n+2}}\right ]u(y)d\sigma _y,
$$
that is
$$
\nabla u(x)={1 \over \omega _n}\int _{{\mathbb S}^{n-1}}
{n\left ( 1-r^2 \right )(y-x)-2|y-x|^2 x \over |y-x|^{n+2}}\;u(y)d\sigma _y.
$$
Thus 
$$
(\nabla u(x), \bs\ell )={1 \over \omega _n}\int _{{\mathbb S}^{n-1}}
{(n\left ( 1-r^2 \right )(y-x)-2|y-x|^2 x, \bs\ell ) \over |y-x|^{n+2}}\;u(y)d\sigma _y\;,
$$
and therefore
\begin{equation} \label{EQ_2A}
{\cal K}_n( x)={1 \over \omega _n}\sup_{|\bs\ell|=1}\int _{{\mathbb S}^{n-1}}
{\big |(n\left ( 1-r^2 \right )(y-x)-2|y-x|^2 x, \bs\ell )\big | \over |y-x|^{n+2}}\;d\sigma _y\;.
\end{equation}
Using (\ref{Eq_2.1.1S}), we obtain
$$
{\cal K}_n( x)={1 \over \omega _n}\sup_{|\bs\ell|=1}\int _{{\mathbb S}^{n-1}}
{\big |(n(1-r^2)\big ((y, \bs \nu_x)-r  )-2r|y-x|^2\big )\ell_\nu+ n(1-r^2)
(y, \bs \tau_x) \ell_\tau \big | \over |y-x|^{n+2}}\;d\sigma _y\;.
$$
The last expression can be written as
\begin{equation} \label{EQ_1}
{\cal K}_n( x)={a_n(r) \over \omega _n}\sup_{|\bs\ell|=1}\int _{{\mathbb S}^{n-1}}
{\big |b_n(r)(y, \bs \tau_x) \ell_\tau+\big (y_n-c_n(r) \big )
\ell_\nu \big | \over \big (1-2ry_n+r^2\big )^{(n+2)/2}}\;d\sigma _y\;,
\end{equation}
where 
\begin{equation} \label{EQ_2}
a_n(r)=n(1-r^2)+4r^2\;,\;\;\;\;\;b_n(r)={n(1-r^2) \over n(1-r^2)+4r^2}\;,\;\;\;\;\;
c_n(r)={n(1-r^2)+2(1+r^2) \over n(1-r^2)+4r^2}r\;.
\end{equation}

{\it 2. Representation for ${\cal K}_n( x)$ by a double integral}. 
Introducing the function
\begin{equation} \label{Kern}
{\cal H}_n(s, t; r, \bs\ell)={\left | b_n(r)s\ell _{\tau}+
\big ( t-c_n(r)\big )\ell _{\nu} \right | \over 
\big ( 1-2r t +r^2 \big )^{(n+2)/2}}\;,
\end{equation}
we write the integral in (\ref{EQ_1}) as the sum
\begin{eqnarray} \label{E_DEF1}
\int _ {{\mathbb S}^{n-1}_+}{\cal H}_n
((y_\tau, \bs \tau_{\!\!x}), y_n; r, \bs\ell)\;d\sigma_y +\int _ {{\mathbb S}^{n-1}_-}{\cal H}_n
((y_\tau, \bs \tau_{\!\!x}), y_n; r, \bs\ell)\;d\sigma_y ,
\end{eqnarray}
where ${\mathbb S}^{n-1}_+=\{y \in {\mathbb S}^{n-1}: (y, \bs e_n) >0 \}$, 
${\mathbb S}^{n-1}_-=\{y \in {\mathbb S}^{n-1}: (y, \bs e_n) <0 \}$. 

Let $y'=(y_1,\dots,y_{n-1}) \in {\mathbb B}'=\{y' \in {\mathbb R}^{n-1}: |y'|<1 \}$.
We put
$$
\bs \tau_{\!\!x}'=\sum_{i=1}^{n-1}(\bs \tau_{\!\!x}, \bs e_i)\bs e_i.
$$

Since $y_n =\sqrt{1-|y '|^2}$ for $y\in {\mathbb S}^{n-1}_+$ 
and $y_n =-\sqrt{1-|y '|^2}$ for $y \in {\mathbb S}^{n-1}_-$ and since
$d\sigma_y=dy'/\sqrt{1-|y'|^2}$, it follows that each of integrals in (\ref{E_DEF1}) 
can be written in the form
\begin{equation} \label{E_DEFP1}
\int _ {{\mathbb S}^{n-1}_+}{\cal H}_n
((y_\tau, \bs \tau_{x}), y_n; r, \bs\ell)d\sigma_y 
\!=\!\int _ {{\mathbb B}^{'}}{{\mathcal H}_n\Big  (  
(y',  \bs \tau_{x}'), \sqrt{1-|y'|^2}; r, \bs\ell \Big  )\over \sqrt{1-|y'|^2}}dy',  
\end{equation}
\begin{equation} \label{E_DEFP2}
\int _ {{\mathbb S}^{n-1}_-}{\cal H}_n
((y_\tau, \bs \tau_{x}), y_n; r, \bs\ell)d\sigma_y 
=\int _ {{\mathbb B}^{'}}{{\mathcal H}_n \Big  (  
(y',  \bs \tau_{x}'), -\sqrt{1-|y'|^2}; r, \bs\ell \Big  )\over \sqrt{1-|y'|^2}}dy'.  
\end{equation}

Putting
\begin{equation} \label{E_DEFIN}
{\cal M}_n(s, t; r, \bs\ell)={\cal H}_n(s, t; r, \bs\ell)+
{\cal H}_n(s, -t; r, \bs\ell),
\end{equation}
and using (\ref{Kern})-(\ref{E_DEFP2}), we rewrite (\ref{EQ_1}) as
\begin{equation} \label{E_DEFPM}
{\cal K}_n( x)={a_n(r) \over \omega _n}\sup_{|\bs\ell|=1}
\int _ {{\mathbb B}^{'}}\!\!{{\mathcal M}_n
\Big ((y',  \bs \tau_{\!\!x}'), \sqrt{1-|y'|^2}; r, \bs\ell \Big  )
\over \sqrt{1-|y'|^2}}\;dy'\;.  
\end{equation}

By the identity
$$
\int_{{\mathbb B}^n}g\big ((\bs y, \bs \xi),\;|\bs y|\big )dy=\omega_{n-1}
\int_0^1 \rho^{n-1}d\rho
\int_0^\pi g\big (|\bs \xi|\rho \cos \varphi ,\; \rho\big )\sin ^{n-2}\varphi \;d\varphi 
$$
(see, e.g., \cite{PBM}, \textbf{3.3.2(3)}),
we transform the integral in (\ref{E_DEFPM}):
\begin{eqnarray} \label{E_DEFA}
& &\hspace{-10mm}\int _ {{\mathbb B}^{'}}\!\!{{\mathcal M}_n\Big  (  
(y',  \bs \tau_{\!\!x}'), \sqrt{1-|y'|^2}; r, \bs\ell \Big  )
 \over \sqrt{1-|y'|^2}}\;dy' \\
& &\nonumber\\ 
& &=\omega_{n-2}\int_0^1 \!\!{\rho^{n-2}\over \sqrt{1-\rho^2}}\;d\rho\!
\int_0^\pi {\mathcal M}_n\Big  (\rho\cos  \varphi ,\; \sqrt{1-\rho^2}; 
r, \bs\ell \Big  )\sin ^{n-3}\varphi d\varphi \nonumber.
\end{eqnarray}
The change $\rho=\sin \theta $ in (\ref{E_DEFA}) gives
\begin{eqnarray} \label{E_DEFH}
& &\hspace{-9mm}\int _ {{\mathbb B}^{'}}\!\!{{\mathcal M}_n\Big  (  
(y',  \bs \tau_{\!\!x}'), \sqrt{1-|y'|^2}; r, \bs\ell \Big  )
 \over \sqrt{1-|y'|^2}}\;dy' \\
& &\nonumber\\ 
& &=\omega_{n-2} \int_0^{\pi/2}\sin^{n-2}\theta d\theta 
\int_0^\pi \!\!\!{\mathcal M}_n\Big  (\sin \theta\cos  \varphi ,\; \cos \theta; 
r, \bs\ell \Big  )
\sin ^{n-3}\varphi  \;d\varphi\;.\nonumber
\end{eqnarray}
Applying (\ref{Kern}), (\ref{E_DEFIN}) and introducing the notation
\begin{eqnarray*}
\hspace{-15mm}{\mathcal F}_n( \theta, \varphi; r, \bs\ell)&=&{\mathcal H}_n
\Big  (\sin \theta\cos  \varphi ,\; \cos \theta;r, \bs\ell \Big  )\nonumber\\
\hspace{-15mm}& &\nonumber\\
\hspace{-15mm}&=&
{\left |b_n(r)\ell_\tau\sin \theta \cos \varphi+\big ( \cos \theta-c_n(r)\big )
\ell_\nu \right | \over 
\big ( 1-2r\cos \theta +r^2 \big )^{(n+2)/2}}\;,
\end{eqnarray*}
we write (\ref{E_DEFH}) as follows
\begin{eqnarray} \label{E_DEFV}
& &\hspace{-5mm}\int _ {{\mathbb B}^{'}}\!\!{{\mathcal M}_n\Big  (  
(y',  \bs \tau_{\!\!x}'), \sqrt{1-|y'|^2}; r, \bs\ell \Big  )
\over \sqrt{1-|y'|^2}}\;dy'\\
& &\nonumber\\
& &=\omega_{n-2}\int_0^{\pi/2}\!\!\!\!\sin^{n-2}\theta d\theta\!\!\int_0^\pi \!\!\!
\Big ({\mathcal F}_n ( \theta, \varphi; r, \bs \ell)+
{\mathcal F}_n (\pi - \theta, \varphi; r, \bs \ell) \Big )
\sin ^{n-3}\varphi  \;d\varphi\; .\nonumber
\end{eqnarray}
Changing the variable $\psi=\pi-\theta$, we obtain
\begin{eqnarray*} 
& &\int_0^{\pi/2}\sin^{n-2}\theta\; d\theta \int_0^\pi 
{\mathcal F}_n\big (\pi- \theta, \varphi; r, \bs \ell \big ) 
\sin ^{n-3}\varphi \;d\varphi\\
& &\\
& &=\int_{\pi/2}^{\pi}\sin^{n-2}\psi\; d\psi\int_0^\pi {\mathcal F}_n
\big ( \psi, \varphi; r, \bs \ell\big )  \sin ^{n-3}\varphi  \;d\varphi,
\end{eqnarray*}
which together with  (\ref{E_DEFV}) leads to the representation of (\ref{E_DEFPM}):
\begin{equation} \label{E_F5AA}
{\cal K}_n(x)={a_n(r)\omega _{n-2} \over \omega _n}\sup_{|\bs\ell|=1}
\int_0^{\pi}\sin^{n-2}\theta\;d\theta \!
\int_0^\pi \!\!{\mathcal F}_n( \theta, \varphi; r, \bs \ell)  
\sin ^{n-3}\varphi  \;d\varphi\;.
\end{equation}

{\it 3. Transformation of representation for ${\cal K}_n(x)$}. We make the change of variable
$$
\theta=2\arctan \left ( {1-r \over 1+r}\tan \vartheta \right )
$$
in (\ref{E_F5AA}). Then
\begin{equation} \label{S_1}
\sin \theta={2\left ({ 1-r \over 1+r}\right )\tan \vartheta \over 1+
\left ({ 1-r \over 1+r}\right )^2\tan^2 \vartheta}\;,
\end{equation}
\begin{equation} \label{S_2}
d\theta={2( 1-r) \over ( 1+r)\cos^2 \vartheta 
\left ( 1+\left ({ 1-r \over 1+r}\right )^2\tan^2 \vartheta \right )}\;d\vartheta\;,
\end{equation}
\begin{equation} \label{S_3}
1-2r\cos \theta +r^2={( 1-r)^2 \over \cos^2 \vartheta 
\left ( 1+\left ({ 1-r \over 1+r}\right )^2\tan^2 \vartheta \right )}\;,
\end{equation}
\begin{equation} \label{S_4}
b_n(r)\ell_\tau\sin \theta \cos \varphi+\big ( \cos \theta-c_n(r)\big )\ell_\nu=
{( 1-r)^2 \big [ 
n\ell_\tau \sin 2\vartheta\cos \varphi+\big ( n\cos 2 \vartheta+(n-2)r\big )\ell_\nu  \big ]
\over \big [n(1-r^2)+4r^2 \big ]\cos^2 \vartheta 
\left ( 1+\left ({ 1-r \over 1+r}\right )^2\tan^2 \vartheta \right )}\;.
\end{equation}
Substituting (\ref{S_1})-(\ref{S_4}) 
in (\ref{E_F5AA}), we arrive at 
\begin{equation} \label{Eq_2.1.4}
{\mathcal K}_n(x)={ 2^{n-2}(n-2) \over \pi(1+r)^{n-1}(1-r)}
\sup _{|\bs\ell|=1} \int_0^{\pi}\sin ^{n-3}\varphi  \;d\varphi
\int_0^{\pi/2} {\mathcal G}_{n}(\vartheta, \varphi ; r, \bs\ell)\;d\vartheta \;,
\end{equation} 
where 
$$
{\mathcal G}_{n}(\vartheta, \varphi ; r, \bs\ell)={\big | 
n\ell_\tau \sin 2\vartheta\cos \varphi+\big ( n\cos 2 \vartheta+(n-2)r\big )\ell_\nu  \big | 
\over \left [1+\left ({1-r \over 1+r} \right )^2 \tan^2 \vartheta \right ]^{(n-2)/2}}
\sin^{n-2}\vartheta\;.
$$

Since the integrand in (\ref{EQ_2A}) does not change when the unit vector
$\bs\ell$ is replaced by $-\bs\ell$, we may assume that 
$\ell_\nu=(\bs\ell, \bs \nu_x) > 0$ in (\ref{Eq_2.1.4}).
Introducing the parameter $\gamma =\ell_{\!\tau}/\ell_{\!\nu}$ in 
(\ref{Eq_2.1.4}) and using the equality $\ell_\tau^2+\ell_\nu^2=1$, 
we arrive at (\ref{Eq_2.1.2}) with $G_{n}(\vartheta, \varphi ; r, \gamma)$ given by (\ref{Eq_2.1.3}).
\end{proof}

By dilation, we obtain the following result, equivalent to Lemma \ref{P_1}
and involving the ball ${\mathbb B}_R$ with an arbitrary $R$.

\begin{lemma} \label{P_2}
Let $u \in h^\infty({\mathbb B}_R)$, and let $x $ be an arbitrary point in ${\mathbb B}_R$. The sharp 
coefficient ${\cal K}_{n, R}(x)$ in the inequality
$$
|\nabla u(x)|\leq {\cal K}_{n, R}( x)||u||_{L^\infty(\partial{\mathbb B}_R)}
$$
is given by
$$
{\mathcal K}_{n, R}(x)={ 2^{n-2}(n-2)R^{n-1} \over \pi(R+|x|)^{n-1}(R-|x|)}
\sup _{\gamma \geq 0}{1\over \sqrt{1+\gamma^2}} \int_0^{\pi}\sin ^{n-3}\varphi  \;d\varphi
\int_0^{\pi/2} G_{n}\left ( \vartheta, \varphi ; {|x|\over R}, \gamma \right )\;d\vartheta \;,
$$
where 
$$
G_{n}(\vartheta, \varphi ; r, \gamma)={\big | n\gamma\sin 2\vartheta\cos \varphi +n\cos 2 \vartheta+(n-2)r \big | 
\over \left [1+\left ({1-r \over 1+r} \right )^2 \tan^2 \vartheta \right ]^{(n-2)/2}}
\sin^{n-2}\vartheta\;.
$$
\end{lemma}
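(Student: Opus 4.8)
The plan is to reduce Lemma~\ref{P_2} to Lemma~\ref{P_1} by a dilation, exploiting the fact that harmonicity, the sup-norm on the boundary, and the gradient all transform in a controlled way under the scaling $x \mapsto x/R$. This is the route already announced in the sentence preceding the statement, so the argument is essentially bookkeeping.

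First I would fix $u \in h^\infty({\mathbb B}_R)$ and introduce $v(y) = u(Ry)$ for $y \in {\mathbb B}$. Since $\Delta_y v(y) = R^2 (\Delta u)(Ry) = 0$, the function $v$ is harmonic in ${\mathbb B}$, and it is bounded, so $v \in h^\infty({\mathbb B})$; moreover the boundary values satisfy $\|v\|_{L^\infty(\partial{\mathbb B})} = \|u\|_{L^\infty(\partial{\mathbb B}_R)}$. The chain rule gives $\nabla_y v(y) = R(\nabla u)(Ry)$, hence for $x = Ry$ one has $|\nabla u(x)| = R^{-1}|\nabla_y v(x/R)|$. Next I would apply Lemma~\ref{P_1} to $v$ at the point $y = x/R$, whose Euclidean norm is $r = |x|/R$. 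This yields $|\nabla_y v(x/R)| \leq {\cal K}_n(x/R)\,\|v\|_{L^\infty(\partial{\mathbb B})}$, and combining with the two relations above produces $|\nabla u(x)| \leq R^{-1}{\cal K}_n(x/R)\,\|u\|_{L^\infty(\partial{\mathbb B}_R)}$, so that ${\cal K}_{n,R}(x) = R^{-1}{\cal K}_n(x/R)$. It then remains to insert the explicit expression $(\ref{Eq_2.1.2})$ with $r$ replaced by $|x|/R$ and to simplify the elementary prefactor: using $(1 + |x|/R)^{n-1} = R^{-(n-1)}(R+|x|)^{n-1}$ and $1 - |x|/R = R^{-1}(R-|x|)$, the factor $R^{-1}\cdot\frac{2^{n-2}(n-2)}{\pi(1+|x|/R)^{n-1}(1-|x|/R)}$ collapses to $\frac{2^{n-2}(n-2)R^{n-1}}{\pi(R+|x|)^{n-1}(R-|x|)}$, which is exactly the claimed coefficient, while the double integral over $(\vartheta,\varphi)$ and the supremum over $\gamma \geq 0$ carry over verbatim with argument $|x|/R$ in $G_n$.

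The only point requiring a word of care is the sharpness assertion, and I do not expect it to be a genuine obstacle. The map $u \mapsto v$ is a linear bijection of $h^\infty({\mathbb B}_R)$ onto $h^\infty({\mathbb B})$ that preserves the boundary sup-norm and multiplies the gradient at corresponding points by the fixed factor $R$; consequently the supremum of $|\nabla u(x)|/\|u\|_{L^\infty(\partial{\mathbb B}_R)}$ over $u$ equals $R^{-1}$ times the supremum of $|\nabla_y v(x/R)|/\|v\|_{L^\infty(\partial{\mathbb B})}$ over $v$. Since the latter supremum is the sharp coefficient ${\cal K}_n(x/R)$ furnished by Lemma~\ref{P_1}, the transferred coefficient $R^{-1}{\cal K}_n(x/R)$ cannot be diminished either, which establishes that the stated ${\cal K}_{n,R}(x)$ is sharp and completes the proof.
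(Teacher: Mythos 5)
Your proof is correct and follows exactly the route the paper takes: the paper states Lemma~2 with the single phrase ``by dilation,'' and your argument is precisely the bookkeeping that phrase compresses, including the correct transfer of sharpness via the norm-preserving bijection $u\mapsto v$ and the correct simplification of the prefactor to $2^{n-2}(n-2)R^{n-1}/\bigl(\pi(R+|x|)^{n-1}(R-|x|)\bigr)$. Nothing is missing.
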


Now, we prove a limit estimate for the gradient of a bounded harmonic function.

\setcounter{theorem}{1}
\begin{theorem} \label{T_1}
Let $\Omega$ be a domain in ${\mathbb R}^n$,
and let $\;{\mathfrak U}(\Omega)$ be the set of harmonic functions $u$ in $\Omega$
with $\sup_\Omega |u|\leq 1$.
Assume that a point $\xi \in \partial\Omega$ can be touched by an interior ball $B$.  
Then
\begin{equation} \label{Eq_lim}
\limsup _{x\rightarrow \xi}\sup_{u\in {\mathfrak U}(\Omega)}
|x-\xi|\big |\nabla u(x)\big | \leq C_n\;, 
\end{equation}
where $x$ is a point at the radius of $B$ directed from the center to $\xi$.
Here the constant $C_n$ is the same as in Theorem $1$.
\end{theorem}
\begin{proof}
Let $n\geq 3$. By Lemma \ref{P_2}, the relations
\begin{equation} \label{Eq_2.1.8}
\limsup_{|x|\rightarrow R}\;\sup \big \{ (R-|x|)|\nabla u(x)|: 
||u||_{h^\infty({\mathbb B}_R)} \leq 1 \big \} \leq \lim_{|x|\rightarrow R}
\;(R-|x|){\cal K}_{n, R}( x)= C_n
\end{equation} 
hold, where 
\begin{equation} \label{Eq_2.1.9}
C_n=\frac{n-2}{2\pi}\sup _{\gamma \geq 0}\;
\frac{1}{\sqrt{1+\gamma^2}}\int _ {0}^{\pi}\sin ^{n-3}
\varphi \;d\varphi\int _ {0}^{\pi/2}
\big |{\mathcal P}_{n}(\vartheta, \varphi; \gamma)\big |
\sin^{n-2}\vartheta\;d\vartheta \;,
\end{equation}
with
\begin{eqnarray*}
{\mathcal P}_{n}(\vartheta, \varphi;  \gamma)&=&
n\gamma\sin 2\vartheta\cos \varphi+n\cos 2 \vartheta+(n-2)\\
& & \\
&=&2\big [n\gamma\cos \vartheta \sin 
\vartheta\cos \varphi+(n\cos^2 \vartheta -1) \big ].
\end{eqnarray*}

According to Proposition 1 in \cite{KM12}, the sharp coefficient 
${\mathcal C}_n(x)$ in the inequality
\begin{equation} \label{Eq_2.1.10}
|\nabla u(x)|\leq {\mathcal C}_n(x)||u||_{h^\infty({\mathbb R}^{n} _{+})} \;,
\end{equation}
where $u$ is a bounded harmonic function in the half-space ${\mathbb R}^{n}_{+}$,
is equal to ${\mathcal C}_n(x)=C_n/x_n$ with the best constant $C_n$ given by (\ref{Eq_2.1.9}). 
By Theorem 1 in \cite{KM12}, the value of $C_n$ is given by the formula
\begin{equation} \label{Eq_2.1.12}
C_n=\frac{4(n-1)^{(n-1)/2}\;\omega_{n-1}}{ n^{n/2}\;\omega_n}\;.
\end{equation}

Let $R$ denote the radius of the ball $B\subset\Omega$ tangent to $\partial\Omega$
at the point $\xi$. We put the origin ${\mathcal O}$ at the center of $B$. 
Let the point $x$ belong to the interval joining ${\mathcal O}$ and $\xi$. 
Then $R-|x|=|x-\xi|$. By (\ref{Eq_2.1.8}) with $C_n$ from (\ref{Eq_2.1.12}) 
on the right-hand side we conclude the proof in the case $n\geq 3$ by reference
to the inequality
\begin{equation} \label{Eq_2.1.13}
||u||_{h^\infty(B)}\leq \sup_\Omega |u|\;.
\end{equation}

The proof of Theorem 2 in the case $n=2$ is analogous, estimate 
(\ref{Eq_2.1.8}) follows from D. Khavinson's \cite{KHAV} inequality
\begin{equation} \label{Eq_KH}
|f'(z)|\leq {4R \over \pi (R^2-|z|^2)}\sup_{|\zeta|<R} |\Re f(\zeta)|
\end{equation}
by the change $f=u+iv$, $f'(z)=u'_x-iv'_y$, where $f$ is holomorphic in 
${\mathbb D}_R=\{ z\in {\mathbb C}: |z|<R \}$.
The estimate (\ref{Eq_2.1.10}) results from (\ref{Eq_HP})
by the change $f=u+iv$, $f'(z)=u'_x-iv'_y$, where $f$ is holomorphic in 
${\mathbb C}_+$.
\end{proof}

\begin{remark} {\bf 1}. The following inequality for the modulus of 
the gradient of a harmonic function is known (see \cite{PW}, Ch. 2, Sect. 13) 
$$
|\nabla u(x)| \leq {A_n\over d_x}\;{\rm osc}_{_{\Omega}} (u)\;,
$$
where
$$
A_n={n\omega_{n-1}\over (n-1)\omega_n}\;.
$$
It is equivalent to the estimate
\begin{equation} \label{Eq_1.2}
|\nabla u(x)| \leq {2A_n\over d_x}\;\sup_\Omega |u|\;,
\end{equation}
where $u$ is a bounded harmonic function in $\Omega\subset{\mathbb R}^n,\; n\geq 2$,
and ${\rm osc}_{_{\Omega}} (u)$ is the oscillation of $u$ on $\Omega$.

The coefficient on the right-hand side of (\ref{Eq_1.2}) is less than
that in the well known gradient estimate (see, e.g., \cite{GiTr}, Sect. 2.7)
$$
|\nabla u(x)| \leq {n\over d_x}\;\sup_\Omega |u|\;.
$$

By
$$
{C_n \over 2A_n}={2 \over \sqrt{n}}{\left ( 1 -{1\over n} \right )^{(n+1)/2}}<1\;,
$$
inequality (\ref{Eq_1.AG}) with $C_n$ from (\ref{Eq_1.AB}) improves (\ref{Eq_1.2}) for 
domains complementary to convex closed domains. 

\smallskip
Sharp estimates of derivatives of harmonic functions can be found in the books 
\cite{KM}, \cite{KM15}. We also mention the articles \cite{AA}, \cite{KAMA1}, 
\cite{KAMA3} dealing with estimates of harmonic functions.
\end{remark}

\setcounter{equation}{0}
\section{Estimates for the maximum value of the modulus of directional derivative of a
vector field with harmonic components}

Let in the domain $\Omega\subset {\mathbb R}^n$, there is a $m$-component 
vector field ${\bs a(x)}=(a_{_1}(x),\dots,a_m(x))$, $m\geq 1$. Let, further ${\bs \ell}=(\ell_1,\dots,\ell_n)$ be a unit $n$-dimensional vector. 
The derivative of the field ${\bs a(x)}$ in the direction ${\bs \ell}$ is defined by
$$
{\partial {\bs a}\over \partial \bs \ell}=\lim _{t \rightarrow 0}
{\bs a(x+t \bs \ell)-\bs a(x) \over t}\;,
$$
that is
\begin{equation} \label{Eq_3.1}
{\partial {\bs a}\over \partial \bs \ell}=(\bs \ell, \nabla ){\bs a}\;.
\end{equation}

Let us introduce some notation used in the sequel.
By $||\bs u||_{[L^\infty(\partial\Omega)]^m} =\mbox{ess}\;\sup \{ |\bs u(x) |:
x \in \partial\Omega \}$ we denote the norm in the space $[L^\infty(\partial\Omega)]^m$ of vector-valued functions $\bs u$ on $\partial \Omega$ with $m$ components from $L^\infty(\partial\Omega)$. 
By $[h^\infty(\Omega)]^m$ we mean  the Hardy space of 
vector-valued functions $\bs u(x)=(u_{_1}(x),\dots,u_m(x))$ with  bounded harmonic components
on $\Omega$ endowed with the norm $||\bs u||_{[h^\infty(\Omega)]^m} =\sup \{ |\bs u(x) |:
x \in \Omega \}$. 

It is known that any element of 
$[h^\infty({\mathbb R}^n_{+})]^m$ can be represented by the Poisson integral
\begin{equation} \label{Eq_3.2}
\bs u(x)=\frac{2}{\omega _n}\int _{\partial {\mathbb R}^n_{+}}\frac{x_n}{|y-x|^n}\;\bs u(y)dy'
\end{equation}
with boundary values in $[L^\infty({\partial {\mathbb R}^n_{+}})]^m$,
where $y=(y', 0)$, $y' \in {\mathbb R}^{n-1}$.

Now, we find a representation for the sharp coefficient ${\mathcal C}_{m,n}(x)$
in the inequality 
\begin{equation} \label{Eq_3.3}
\max _{|\bs \ell|=1}\big |(\bs \ell, \nabla ){\bs u}(x) \big |\leq{\mathcal C}_{m,n}(x)||\bs u||_{[L^\infty(\partial{\mathbb R}^n_+)]^m}\;,
\end{equation}
where $\bs u\in [h^\infty({\mathbb R}^n_{+})]^m$ and $x\in {\mathbb R}^n_{+}$.

\setcounter{theorem}{0}
\begin{lemma} \label{L_1}
Let $\bs u \in [h^\infty({\mathbb R}^n_{+})]^m$, and let  $x $ be  an arbitrary point in
${\mathbb R}^n _+$. The sharp coefficient ${\mathcal C}_{m,n}(x)$ in 
$(\ref{Eq_3.3})$ is given by
\begin{equation} \label{Eq_3.4}
{\mathcal C}_{m,n}(x)=C_{m,n}x_n^{-1}\;,
\end{equation}
where
\begin{equation} \label{Eq_3.5}
C_{m,n}=\frac{1}{\omega _n}\max _{|\bs \ell|=1}
\int _ {{\mathbb S}^{n-1}}
\big |\big (\bs e_n -n(\bs e_{\sigma}, \bs e_n)\bs e_{\sigma},\; \bs \ell \big )\big |
\;d\sigma \;,
\end{equation}
and $\bs e_\sigma $ stands for the $n$-dimensional unit vector joining the origin to 
a point $\sigma $ on the sphere ${\mathbb S}^{n-1}$.
\end{lemma}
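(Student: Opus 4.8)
The plan is to reduce the sharp coefficient to a single scalar kernel against which the vector boundary data is integrated, and then to pass from the boundary hyperplane to the unit sphere by a central projection.

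First I would differentiate the Poisson representation (\ref{Eq_3.2}) under the integral sign. Since $\nabla_x\big(x_n|y-x|^{-n}\big)=\bs e_n|y-x|^{-n}-nx_n(x-y)|y-x|^{-n-2}$, taking the inner product with $\bs\ell$ shows that $(\bs\ell,\nabla)\bs u(x)=\tfrac{2}{\omega_n}\int_{\partial{\mathbb R}^n_+}\Phi(x,y;\bs\ell)\,\bs u(y)\,dy'$, where the scalar kernel is $\Phi(x,y;\bs\ell)=\ell_n|y-x|^{-n}-nx_n(x-y,\bs\ell)|y-x|^{-n-2}$ and the same $\Phi$ multiplies every component of $\bs u$.

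Next, for a fixed direction $\bs\ell$ the vector $\int\Phi\,\bs u\,dy'$ has Euclidean length at most $\|\bs u\|_{[L^\infty(\partial{\mathbb R}^n_+)]^m}\int|\Phi|\,dy'$, and equality is attained for the admissible boundary data $\bs u(y)=\operatorname{sign}\Phi(x,y;\bs\ell)\,\bs c$ with $\bs c$ a fixed unit vector; hence for each $\bs\ell$ the sharp constant is $\tfrac{2}{\omega_n}\int_{{\mathbb R}^{n-1}}|\Phi(x,y;\bs\ell)|\,dy'$. The bound $\max_{\bs\ell}|(\bs\ell,\nabla)\bs u(x)|\le\|\bs u\|\,\max_{\bs\ell}\tfrac{2}{\omega_n}\int|\Phi|$ holds for every $\bs u$, while the lower bound is realized at the maximizing direction by the sign data, so the order of $\max_{\bs\ell}$ and $\sup_{\bs u}$ may be exchanged and $\mathcal C_{m,n}(x)=\tfrac{2}{\omega_n}\max_{|\bs\ell|=1}\int_{{\mathbb R}^{n-1}}|\Phi(x,y;\bs\ell)|\,dy'$.

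Finally I would rewrite $\Phi$ in terms of the unit vector $\bs e_\sigma=(y-x)/|y-x|$. Using $x_n=-|y-x|(\bs e_\sigma,\bs e_n)$ and $(x-y,\bs\ell)=-|y-x|(\bs e_\sigma,\bs\ell)$ one obtains $\Phi=|y-x|^{-n}\big(\bs e_n-n(\bs e_\sigma,\bs e_n)\bs e_\sigma,\bs\ell\big)$. As $y'$ runs over ${\mathbb R}^{n-1}$, $\bs e_\sigma$ covers the lower hemisphere $(\bs e_\sigma,\bs e_n)<0$, and the central projection from $x$ gives $dy'/|y-x|^n=d\sigma/x_n$ (because $d\sigma=|(\bs e_\sigma,\bs e_n)|\,|y-x|^{-(n-1)}dy'$ and $|y-x|=x_n/|(\bs e_\sigma,\bs e_n)|$). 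Since the integrand $|(\bs e_n-n(\bs e_\sigma,\bs e_n)\bs e_\sigma,\bs\ell)|$ is invariant under $\bs e_\sigma\mapsto-\bs e_\sigma$, the hemisphere integral is half of the integral over ${\mathbb S}^{n-1}$, which converts the factor $\tfrac{2}{\omega_n}$ into $\tfrac{1}{\omega_n}$ and yields (\ref{Eq_3.4})--(\ref{Eq_3.5}). The step needing the most care is the sharpness claim: one must verify that the $\operatorname{sign}\Phi$ boundary data (approximated by continuous fields if desired) is genuinely admissible, that differentiation under the Poisson integral is valid, and that the interchange of $\max_{\bs\ell}$ and $\sup_{\bs u}$ is justified; the passage to the sphere is then a routine computation of the projection Jacobian.
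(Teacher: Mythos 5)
Your proposal is correct and follows essentially the same route as the paper: differentiate the Poisson integral to obtain the scalar kernel $(\bs e_n-n(\bs e_{xy},\bs e_n)\bs e_{xy},\bs\ell)|y-x|^{-n}$, extract the sharp constant for fixed $\bs\ell$ via sign-type boundary data aligned with a fixed unit vector, and convert the hyperplane integral into a sphere integral (the paper also absorbs the factor $2$ by passing from the hemisphere to the full sphere, exactly as you do). Your explicit justification of the interchange of $\max_{|\bs\ell|=1}$ and the supremum over $\bs u$, and of the Jacobian $d\sigma=x_n|y-x|^{-n}dy'$, only makes explicit steps the paper leaves implicit.
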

\begin{proof} Let $x=(x', x_n)$ be a fixed point in ${\mathbb R}^n_+$.
The representation (\ref{Eq_3.2}) implies
$$
\frac{\partial \bs u}{\partial x_j}=\frac{2}{\omega _n}\int _{\partial {\mathbb R}^n_{+}}
\left [ \frac{\delta_{nj}}{|y-x|^n} + \frac{nx_n (y_j-x_j)}{|y-x|^{n+2}}  \right ]\bs u(y)dy',
$$
that is, by (\ref{Eq_3.1}),
\begin{eqnarray*}
{\partial {\bs u}\over \partial \bs \ell}
&=&\frac{2}{\omega _n}\sum_{j=1}^n \;\ell_j \int _{\partial {\mathbb R}^n_{+}}
\left [\; \frac{\delta_{nj}}{|y-x|^n} + \frac{nx_n (y_j-x_j)}{|y-x|^{n+2}}\;\right ]\bs u(y)dy'\\
& &\\
&=&\frac{2}{\omega _n}\int _{\partial {\mathbb R}^n_{+}}
\frac{(\bs e_{n}  - n(\bs e_{xy}, \bs e_{n})\bs e_{xy},\; \bs \ell)}{|y-x|^n}\; \bs u(y)dy,
\end{eqnarray*}
where $\bs e_{xy}=(y-x)|y-x|^{-1}$. For any $\bs z \in {\mathbb S}^{m-1}$,
$$
\big ((\bs \ell, \nabla ){\bs u}(x), \bs z \big )=
\frac{2}{\omega _n}\int _{\partial {\mathbb R}^n_{+}}
\frac{(\bs e_{n}  - n(\bs e_{xy}, \bs e_{n})\bs e_{xy},\; \bs \ell)}
{|y-x|^n}\;(\bs u(y), \bs z)dy'.
$$
Hence,
\begin{eqnarray*}
{\mathcal C}_{m,n}(x)&=&\frac{2}{\omega _n}\max _{|\bs \ell|=1}\int _{\partial {\mathbb R}^n_{+}}
\frac{\big |\big (\bs e_{n}  - n(\bs e_{xy}, \bs e_{n})\bs e_{xy}, \bs \ell \big )\big | }
{|y-x|^n}\;dy'\\
& &\\
&=&\frac{1}{\omega _n x_n}\max _{|\bs \ell|=1}\int _ {{\mathbb S}^{n-1}}
\big |\big (\bs e_n -n(\bs e_{\sigma}, \bs e_n)\bs e_{\sigma},\; \bs \ell \big )\big |
\;d\sigma \;.
\end{eqnarray*}
The last equality proves (\ref{Eq_3.4}) and (\ref{Eq_3.5}).
\end{proof}

By Lemma \ref{L_1}, the sharp coefficient ${\mathcal C}_{m,n}(x)$ 
in inequality (\ref{Eq_3.3}) does not depend on $m$. Thus,
${\mathcal C}_{m,n}(x)={\mathcal C}_{1,n}(x)={\mathcal C}_n(x)$, where ${\mathcal C}_n(x)=
C_n x_n^{-1}$ is the sharp coefficient in (\ref{Eq_2.1.10}).
Thus, we arrive at the following generalization of Theorem 1 in our paper 
\cite{KM12}, where the case $m=1$ is treated.

\begin{proposition} \label{P_3}
Let $\bs u \in [h^\infty({\mathbb R}^n_{+})]^m$ and let  $x $ be an arbitrary point in
${\mathbb R}^n _+$. The inequality
\begin{equation} \label{Eq_3.3A}
\max _{|\bs \ell|=1}\big |(\bs \ell, \nabla ){\bs u}(x) \big |
\leq C_n x_n^{-1}||\bs u||_{[L^\infty(\partial{\mathbb R}^n_+)]^m}
\end{equation}
holds, where the best constant $C_n$ is the same as in Theorem $1$. 
\end{proposition}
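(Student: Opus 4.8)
The plan is to obtain the proposition as an immediate consequence of Lemma \ref{L_1}, reducing the vector-valued estimate to the already-known scalar one. First I would invoke Lemma \ref{L_1} to assert that the sharp coefficient in (\ref{Eq_3.3}) equals $\mathcal{C}_{m,n}(x)=C_{m,n}x_n^{-1}$ with $C_{m,n}$ given by (\ref{Eq_3.5}). This already yields an inequality of the shape (\ref{Eq_3.3A}) with $C_{m,n}$ in place of $C_n$, the constant being sharp; it then remains only to identify $C_{m,n}$ with $C_n$.

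The key observation is that the right-hand side of (\ref{Eq_3.5}) does not involve $m$ at all. Indeed, the integrand $\big|(\bs e_n-n(\bs e_\sigma,\bs e_n)\bs e_\sigma,\bs\ell)\big|$, the domain of integration ${\mathbb S}^{n-1}$, and the vector $\bs\ell$ ranging over the unit sphere of ${\mathbb R}^n$ are purely $n$-dimensional objects; the number of components $m$ of the field $\bs u$ has been eliminated inside the proof of Lemma \ref{L_1} through the pairing with $\bs z\in {\mathbb S}^{m-1}$. Consequently $C_{m,n}=C_{1,n}$ for every $m\ge 1$.

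It then suffices to treat the scalar case $m=1$. When $m=1$ one has $\max_{|\bs\ell|=1}|(\bs\ell,\nabla)u(x)|=|\nabla u(x)|$, so that (\ref{Eq_3.3}) becomes exactly the half-space gradient estimate (\ref{Eq_2.1.10}). The sharp coefficient there is, by the results of \cite{KM12} recalled around (\ref{Eq_2.1.10})--(\ref{Eq_2.1.12}), equal to $\mathcal{C}_n(x)=C_n x_n^{-1}$ with $C_n$ as in (\ref{Eq_2.1.12}), that is, as in Theorem \ref{T_0}. Hence $C_{1,n}=C_n$, and combining with the previous step gives $C_{m,n}=C_n$. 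Substituting this into the estimate furnished by Lemma \ref{L_1} produces (\ref{Eq_3.3A}) with best constant $C_n$.

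I do not expect any genuine obstacle here: the analytic work has already been carried out in Lemma \ref{L_1}, and the proposition is essentially a reading of its conclusion. The only point requiring care is to confirm that the maximum in (\ref{Eq_3.5}) is taken over $\bs\ell$ alone, the $m$-dependence having been discharged by the optimal alignment of the boundary vector $\bs u(y)$ with $\bs z$ in the proof of Lemma \ref{L_1}; granting this, the $m$-independence of $C_{m,n}$ and the reduction to the scalar constant $C_n$ are both immediate.
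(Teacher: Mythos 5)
Your argument is correct and coincides with the paper's own reasoning: the paper likewise observes that the sharp coefficient ${\mathcal C}_{m,n}(x)$ from Lemma \ref{L_1} is independent of $m$, identifies it with the scalar coefficient ${\mathcal C}_n(x)=C_n x_n^{-1}$ of (\ref{Eq_2.1.10}) from \cite{KM12}, and reads off the proposition. No gaps.
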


The assertion below is an extension of Theorem 1.

\setcounter{theorem}{1}
\begin{proposition} \label{P_4} 
Let $\Omega={\mathbb R}^n\backslash {\overline G}$, where $G$ is a 
convex subdomain of ${\mathbb R}^n$, and let $\bs u$ be a
vector-valued function with $m$ bounded harmonic components in $\Omega$. 
Then for any point $x\in \Omega$ the inequality
\begin{equation} \label{Eq_1.A}
\max _{|\bs \ell|=1}\big |(\bs \ell, \nabla ){\bs u}(x) \big |\leq {C_n \over d_x}\;
\sup_\Omega |\bs u|  
\end{equation}
holds, where the constant  $C_n$ is the same as in Theorem $1$. 
\end{proposition}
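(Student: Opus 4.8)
The plan is to reduce Proposition~\ref{P_4} to a half-space estimate exactly as was done for Theorem~\ref{T_0}, but now using the vector-valued half-space bound of Proposition~\ref{P_3} in place of the scalar estimate (\ref{Eq_1.ABC}). The geometric ingredient is unchanged: since $G$ is convex and $\Omega={\mathbb R}^n\backslash\overline G$ is its (open) complement, for each fixed $x\in\Omega$ there is a nearest boundary point $\xi\in\partial\Omega$, and the hyperplane $T(\xi)$ through $\xi$ orthogonal to the segment $[\xi,x]$ is a supporting hyperplane of $G$. Consequently the open half-space ${\mathbb R}^n_\xi$ bounded by $T(\xi)$ and containing $x$ lies entirely in $\Omega$, with $\operatorname{dist}(x,T(\xi))=d_x$.

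First I would restrict $\bs u$ to ${\mathbb R}^n_\xi$: its components are bounded and harmonic there, so $\bs u\in[h^\infty({\mathbb R}^n_\xi)]^m$, and a rigid motion carries ${\mathbb R}^n_\xi$ to the standard upper half-space ${\mathbb R}^n_+$ with $x$ mapped to a point whose distance to the boundary is $d_x$. Since the directional derivative $(\bs\ell,\nabla)\bs u$ and its maximum over unit $\bs\ell$ are invariant under orthogonal change of coordinates, Proposition~\ref{P_3} applies verbatim and yields
\begin{equation*}
\max_{|\bs\ell|=1}\big|(\bs\ell,\nabla)\bs u(x)\big|
\leq \frac{C_n}{d_x}\,\|\bs u\|_{[L^\infty(\partial{\mathbb R}^n_\xi)]^m}.
\end{equation*}
The next step is the trivial domination $\|\bs u\|_{[L^\infty(\partial{\mathbb R}^n_\xi)]^m}\leq\sup_\Omega|\bs u|$, which holds because $\bs u$ is continuous up to $\partial{\mathbb R}^n_\xi\subset\overline\Omega$ (or, if only interior boundedness is assumed, by taking boundary values in the nontangential sense and using $T(\xi)\subset\Omega$ except at $\xi$). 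Combining the two displays gives (\ref{Eq_1.A}) for the fixed point $x$, and since $x\in\Omega$ was arbitrary the inequality holds throughout $\Omega$.

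The only subtlety I anticipate concerns the boundary-value bound: Proposition~\ref{P_3} is stated for $\bs u\in[h^\infty({\mathbb R}^n_+)]^m$ with an $L^\infty$ boundary norm, so one must verify that the restriction of a bounded harmonic $\bs u$ to the sub-half-space has boundary values controlled by $\sup_\Omega|\bs u|$; this is immediate because $T(\xi)\setminus\{\xi\}$ lies in the open set $\Omega$ where $\bs u$ is harmonic and bounded, so its Poisson boundary data on $T(\xi)$ are themselves bounded by $\sup_\Omega|\bs u|$. Optimality of $C_n$ need not be re-proved here, since the constant is inherited from Theorem~\ref{T_0} and Proposition~\ref{P_3}, where sharpness is already established via the half-space extremal configuration.
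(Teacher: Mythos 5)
Your proposal is correct and follows essentially the same route as the paper: reduce to the supporting half-space ${\mathbb R}^n_\xi\subset\Omega$ at the nearest boundary point $\xi$, apply the sharp half-space estimate of Proposition~3, and conclude with the domination of the half-space norm of $\bs u$ by $\sup_\Omega|\bs u|$. The paper simply states the last step as $\|\bs u\|_{[h^\infty({\mathbb R}^n_\xi)]^m}\leq\sup_\Omega|\bs u|$, which is the same observation you make about the boundary values.
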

\begin{proof} Let $\xi \in \partial\Omega$ be the point at $\partial\Omega$ nearest to
$x \in \Omega$. Let the notation 
${\mathbb R}^n_\xi$ be the same as in the proof of Theorem 1.
By Proposition \ref{P_3},
$$
\max _{|\bs \ell|=1}\big |(\bs \ell, \nabla ){\bs u}(x) \big |\leq {C_n\over d_x}\;||\bs u||_{[h^\infty({\mathbb R}^n_\xi)]^m}\;,
$$
where $C_n$ is given by (\ref{Eq_1.AB}). Then, using the inequality
\begin{equation} \label{Eq_ine}
||\bs u||_{[h^\infty({\mathbb R}^n_\xi)]^m}\leq \sup_\Omega |\bs u|\;,
\end{equation}
we arrive at (\ref{Eq_1.A}).
\end{proof}

Any element of $[h^\infty({\mathbb B})]^m$ can be represented 
as the Poisson integral
\begin{equation} \label{Eq_3.6}
\bs u(x)={1 \over \omega _n}\int _{{\mathbb S}^{n-1}}{1-r^2 \over |y-x|^n}\bs u(y)d\sigma_y
\end{equation}
with boundary values in $[L^\infty(\partial{\mathbb B})]^m$.

In the next assertion we find a representation for the sharp coefficient ${\mathcal K}_{m,n}(x)$
in the inequality 
\begin{equation} \label{Eq_3.7}
\max _{|\bs \ell|=1}\big |(\bs \ell, \nabla ){\bs u}(x) \big |\leq{\mathcal K}_{m,n}(x)||\bs u||_
{[L^\infty(\partial{\mathbb B})]^m}\;.
\end{equation}

\setcounter{theorem}{0}
\begin{lemma} \label{L_2}
Let $\bs u \in [h^\infty({\mathbb B})]^m$, and let  $x $ be  an arbitrary point in
${\mathbb B}$. The sharp coefficient ${\mathcal K}_{m,n}(x)$ in $(\ref{Eq_3.7})$ 
is given by
\begin{equation} \label{Eq_3.8}
{\cal K}_{m,n}( x)={1 \over \omega _n}\sup_{|\bs\ell|=1}\int _{{\mathbb S}^{n-1}}
{\big |\big ( n\left ( 1-r^2 \right )(y-x)-2|y-x|^2 x, \bs\ell \big )\big | \over |y-x|^{n+2}}\;d\sigma _y\;.
\end{equation}
\end{lemma}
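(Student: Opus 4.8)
The plan is to imitate exactly the scheme used in the proof of Lemma \ref{L_1}, but now starting from the Poisson integral $(\ref{Eq_3.6})$ for the ball instead of the half-space. First I would fix a point $x\in{\mathbb B}$ and differentiate $(\ref{Eq_3.6})$ under the integral sign with respect to each coordinate $x_i$. Since the only $x$-dependence sits in the Poisson kernel $(1-r^2)|y-x|^{-n}$ with $r=|x|$, this is a routine computation: the derivative of $1-r^2=1-|x|^2$ contributes a term $-2x_i$, and the derivative of $|y-x|^{-n}$ contributes $n(y_i-x_i)|y-x|^{-n-2}$. Assembling the components into a gradient exactly as in the displayed computation preceding Lemma \ref{L_1}, I would obtain
\begin{equation*}
\nabla\bs u(x)=\frac{1}{\omega_n}\int_{{\mathbb S}^{n-1}}
\frac{n(1-r^2)(y-x)-2|y-x|^2\,x}{|y-x|^{n+2}}\;\bs u(y)\,d\sigma_y ,
\end{equation*}
which is the ball analogue of the corresponding half-space formula, the numerator matching precisely the vector appearing inside $(\ref{Eq_3.8})$.

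Next I would pair with the unit directional vector $\bs\ell$ and, for an arbitrary $\bs z\in{\mathbb S}^{m-1}$, form the scalar quantity $\big((\bs\ell,\nabla)\bs u(x),\bs z\big)$, writing it as an integral of the scalar kernel $\big(n(1-r^2)(y-x)-2|y-x|^2x,\bs\ell\big)|y-x|^{-(n+2)}$ against the scalar harmonic function $(\bs u(y),\bs z)$. This reduces the vector problem to the scalar one and makes the independence of the sharp coefficient from $m$ transparent, just as in Lemma \ref{L_1}. Taking the supremum over $\bs z$ and over boundary data $\bs u$ with $\|\bs u\|_{[L^\infty(\partial{\mathbb B})]^m}\le 1$, the extremal (signed) choice of data turns the pairing into the absolute value of the kernel, and then maximizing over $\bs\ell$ yields $(\ref{Eq_3.8})$.

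I would structure this exactly as two matching inequalities. For the upper bound, Cauchy--Schwarz together with $|\bs u(y)|\le\|\bs u\|$ gives $\max_{|\bs\ell|=1}|(\bs\ell,\nabla)\bs u(x)|$ bounded by the right-hand side of $(\ref{Eq_3.8})$ times $\|\bs u\|$. For sharpness I would, for each fixed $\bs\ell$, choose the vector boundary data that aligns $\bs u(y)$ with the sign of the scalar kernel (taking $\bs u$ a fixed unit vector in ${\mathbb R}^m$ multiplied by the signum of the kernel, suitably approximated by continuous functions), which realizes the integral in $(\ref{Eq_3.8})$ up to $\varepsilon$; passing to the supremum over $\bs\ell$ shows the coefficient cannot be diminished. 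The only delicate point is this sharpness step: the signum data is not continuous, so strictly one argues by density, approximating it in an appropriate sense by admissible boundary values and using that the Poisson integral depends continuously on its data. This is the standard and expected obstacle, but it is routine here because the kernel is, for fixed $x$, a fixed bounded continuous function on the compact sphere ${\mathbb S}^{n-1}$, so the approximation introduces only an arbitrarily small error. No further transformation of $(\ref{Eq_3.8})$ into a double integral is needed at this stage, since the lemma only asserts the surface-integral representation; the reduction to the form of Lemma \ref{P_1} would be carried out afterward by the same change of variables $\theta=2\arctan\!\big(\tfrac{1-r}{1+r}\tan\vartheta\big)$ used there.
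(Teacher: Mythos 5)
Your proposal is correct and follows essentially the same route as the paper: differentiate the Poisson integral $(\ref{Eq_3.6})$, form $(\bs\ell,\nabla)\bs u$, pair with an arbitrary $\bs z\in{\mathbb S}^{m-1}$ to reduce to a scalar kernel, and read off the sharp coefficient. The paper compresses the final extremality step into ``which implies $(\ref{Eq_3.8})$''; your explicit signum-data argument merely fills that in (and since the norm is $L^\infty(\partial{\mathbb B})$, the signum data is already admissible, so even the continuous approximation you worry about is not needed).
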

\begin{proof} Fix a point $x \in {\mathbb B}$. By (\ref{Eq_3.6})
$$
{\partial \bs u \over \partial x_j}={1 \over \omega _n }\int _{{\mathbb S}^{n-1}}
\left [ {-2x_{j} \over |y-x|^n} + 
{n \left ( 1-r^2 \right ) (y_j-x_j)\over |y-x|^{n+2}}\right ]\bs u(y)d\sigma _y,
$$
that is
$$
{\partial \bs u \over \partial \bs\ell}=(\bs \ell, \nabla ){\bs u}(x)=
{1 \over \omega _n}\sum_{j=1}^n \;\ell_j\int _{{\mathbb S}^{n-1}}
{n\left ( 1-r^2 \right )(y_j-x_j)-2|y-x|^2 x_j \over |y-x|^{n+2}}\;\bs u(y)d\sigma _y.
$$
For any $\bs z \in {\mathbb S}^{m-1}$ we have
$$
\big ((\bs \ell, \nabla ){\bs u}(x), \bs z \big )={1 \over \omega _n}\int _{{\mathbb S}^{n-1}}
{\big ( n\left ( 1-r^2 \right )(y-x)-2|y-x|^2 x, \bs\ell \big ) \over |y-x|^{n+2}}\;
\big (\bs u(y), \bs z \big )d\sigma _y\;,
$$
which implies (\ref{Eq_3.8}).
\end{proof}

The next assertion is a generalization of Theorem 2.

\setcounter{theorem}{2}
\begin{proposition} \label{P_5} Let $\Omega$ be a domain in ${\mathbb R}^n$.
Let $\bs{\mathfrak U}(\Omega)$ be the set of $m$-component vector-valued functions 
$\bs u$ whose components are harmonic in $\Omega$, with $\sup_\Omega |\bs u|\leq 1$.
Assume that a point $\xi \in \partial\Omega$ can be touched by an interior ball $B$. Then
\begin{equation} \label{Eq_1.5}
\limsup _{x\rightarrow \xi}\sup_{\bs u\in \bs{\mathfrak U}(\Omega)}\max_{|\bs \ell|=1}
|x-\xi|\big |(\bs \ell, \nabla ){\bs u}(x) \big | \leq C_n\;,
\end{equation}
where $x$ is a point of the radius of $B$ directed from the center to $\xi$. 
Here the constant $C_n$ is the same as in Theorem $1$.
\end{proposition}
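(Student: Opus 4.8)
The plan is to follow the proof of Theorem \ref{T_1} almost verbatim, after first reducing the vector-valued directional derivative to the scalar gradient. The key remark is that the sharp coefficient $\mathcal{K}_{m,n}(x)$ furnished by Lemma \ref{L_2} is governed by exactly the same surface integral as the scalar gradient coefficient $\mathcal{K}_n(x)$ of Lemma \ref{P_1}: the integrand in (\ref{Eq_3.8}) coincides verbatim with the one in (\ref{EQ_2A}). Hence $\mathcal{K}_{m,n}(x)=\mathcal{K}_n(x)$ for every $m$, i.e. the coefficient is independent of the number of components, just as was observed for the half-space immediately after Lemma \ref{L_1}. First I would record this identity, and then, by the same dilation argument that turned Lemma \ref{P_1} into Lemma \ref{P_2}, pass to a ball $\mathbb{B}_R$ of arbitrary radius to obtain $\mathcal{K}_{m,n,R}(x)=\mathcal{K}_{n,R}(x)$.

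Because the two ball coefficients coincide, the limit computation (\ref{Eq_2.1.8}) carried out in the proof of Theorem \ref{T_1} applies without any change and gives
$$
\lim_{|x|\to R}(R-|x|)\,\mathcal{K}_{m,n,R}(x)=C_n,
$$
with $C_n$ the best constant of Theorem \ref{T_0}. This already covers every $n\ge 2$: for $n\ge 3$ the value of $C_n$ is pinned down through the half-space results of \cite{KM12}, while for $n=2$ relation (\ref{Eq_2.1.8}) rests on Khavinson's inequality, exactly as in Theorem \ref{T_1}; no separate complex-analytic step is needed here, since the coefficient has already been identified with the scalar one.

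The geometric and norm-comparison steps are then identical to those of Theorem \ref{T_1}. I would place the origin at the center $\mathcal{O}$ of the touching ball $B\subset\Omega$ of radius $R$ and take $x$ on the segment from $\mathcal{O}$ to $\xi$, so that $|x-\xi|=R-|x|$. For any $\bs u\in\bs{\mathfrak U}(\Omega)$, its restriction to $B$ belongs to $[h^\infty(B)]^m$ and, being the Poisson integral of its boundary values, satisfies $\|\bs u\|_{[L^\infty(\partial B)]^m}\le\sup_\Omega|\bs u|\le 1$, the analogue of (\ref{Eq_2.1.13}). The $\mathbb{B}_R$ form of Lemma \ref{L_2} then gives $\max_{|\bs\ell|=1}|(\bs\ell,\nabla)\bs u(x)|\le\mathcal{K}_{m,n,R}(x)$ uniformly in $\bs u$; multiplying by $|x-\xi|=R-|x|$, taking the supremum over $\bs u$, and letting $x\to\xi$ along the radius produces (\ref{Eq_1.5}) with the asserted constant $C_n$.

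The one step deserving genuine care is the very first: checking that $\mathcal{K}_{m,n}(x)$ is truly independent of $m$ and equals the scalar $\mathcal{K}_n(x)$, and that dilation preserves this equality on $\mathbb{B}_R$. Once this reduction is in place the proof is purely a transcription of Theorem \ref{T_1}, since the limiting behaviour of the coefficient, the identification of $C_n$, and the passage from the interior ball $B$ to all of $\Omega$ are all inherited unchanged from the scalar case.
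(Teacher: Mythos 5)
Your proposal is correct and follows essentially the same route as the paper: reduce to the scalar case via the observation that the sharp coefficient $\mathcal{K}_{m,n}(x)$ of Lemma 3 is independent of $m$ and equals $\mathcal{K}_n(x)$, dilate to $\mathbb{B}_R$, invoke the limit relation from the proof of Theorem 2, and finish with the geometric identification $|x-\xi|=R-|x|$ and the norm comparison on the interior ball. This matches the paper's argument in all essential steps.
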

\begin{proof}
By Lemma \ref{L_2}, ${\mathcal K}_{m,n}(x)$ does not depend on $m$ and therefore
${\mathcal K}_{m,n}(x)={\mathcal K}_{1,n}(x)={\mathcal K}_n(x)$, where ${\mathcal K}_n(x)$
is the sharp coefficient in (\ref{Eq_2.1.1}). Hence (\ref{Eq_3.7}) can be written in the form
$$
\max _{|\bs \ell|=1}\big |(\bs \ell, \nabla ){\bs u}(x) \big |\leq{\mathcal K}_n(x)||\bs u||_
{[L^\infty(\partial{\mathbb B})]^m}\;.
$$
By dilation in the last inequality we obtain the analogue of Lemma 2
\begin{equation} \label{Eq_3.9}
\max _{|\bs \ell|=1}\big |(\bs \ell, \nabla ){\bs u}(x) \big |\leq{\mathcal K}_{n, R}(x)||\bs u||_
{[L^\infty(\partial{\mathbb B}_R)]^m}\;,
\end{equation}
where $x \in {\mathbb B}_R$ and $\bs u \in [h^\infty({\mathbb B}_R)]^m$. 
Now, (\ref{Eq_3.9}) along with the representation of ${\mathcal K}_{n, R}(x)$
from Lemma 2 leads to the inequality
$$
\limsup_{|x|\rightarrow R}\;\sup \big \{ (R-|x|)\big |(\bs \ell, \nabla ){\bs u}(x) \big |:
|\bs \ell|=1,\; 
||\bs u||_{[h^\infty({\mathbb B}_R)]^m} \leq 1 \big \} \leq \lim_{|x|\rightarrow R}
\;(R-|x|){\cal K}_{n, R}( x)= C_n\;,
$$
where $C_n$ is given by (\ref{Eq_2.1.9}). The proof is completed in the same way as
that of Theorem 2, with the only difference that (\ref{Eq_2.1.13}) is replaced by the
inequality
\begin{equation} \label{Eq_est}
||\bs u||_{[h^\infty(B)]^m}\leq \sup_\Omega |\bs u|.
\end{equation}
\end{proof}

\setcounter{equation}{0}
\section{Estimates for the divergence of a vector field with harmonic components}

Let ${\bs u(x)}=(u_{_1}(x),\dots,u_n(x))$ be a vector field with $n$ 
bounded harmonic components in $\Omega\subset{\mathbb R}^n$.

\begin{proposition} \label{P_6} Let $\bs u \in [h^\infty({\mathbb R}^n_{+})]^n$, 
and let  $x $ be  an arbitrary point in ${\mathbb R}^n _+$.
The sharp coefficient ${\mathcal D}_n(x)$ in the inequality
\begin{equation} \label{Eq_5.1}
\left |{\rm div}\;\bs u (x)\right |\leq
{\mathcal D}_n(x)||\bs u||_{[L^\infty(\partial{\mathbb R}^n_+)]^n}
\end{equation}
is given by
\begin{equation} \label{Eq_5.2}
{\mathcal D}_n(x)=D_n x_n^{-1}\;,
\end{equation}
where
\begin{equation} \label{Eq_5.3}
D_n={2 \omega_{n-1} \over \omega_n}\int_0^{\pi/2}
\big [1+n(n-2)\cos^2\vartheta \big ]^{1/2}\sin^{n-2}\vartheta d\vartheta\;.
\end{equation}

In particular, 
$$
D_2=1\;,\;\;\;\;\;\;\;\;D_3=1+{\sqrt{3} \over 6}
\ln\big (2+\sqrt{3}\big )\;.
$$
\end{proposition}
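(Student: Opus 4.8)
The plan is to follow the template of Lemma~\ref{L_1} almost verbatim, since the divergence is merely the trace of the Jacobian already computed there. First I would start from the Poisson representation (\ref{Eq_3.2}) and differentiate under the integral sign. The proof of Lemma~\ref{L_1} already records
$$\frac{\partial \bs u}{\partial x_j} = \frac{2}{\omega_n}\int_{\partial\mathbb{R}^n_+}\left[\frac{\delta_{nj}}{|y-x|^n} + \frac{nx_n(y_j - x_j)}{|y-x|^{n+2}}\right]\bs u(y)\,dy',$$
so taking the $j$-th scalar component of $\partial\bs u/\partial x_j$ and summing over $j$ gives
$$\mathrm{div}\,\bs u(x) = \frac{2}{\omega_n}\int_{\partial\mathbb{R}^n_+}\frac{\big(\bs e_n - n(\bs e_{xy},\bs e_n)\bs e_{xy},\,\bs u(y)\big)}{|y-x|^n}\,dy',$$
where $\bs e_{xy} = (y-x)|y-x|^{-1}$ and I have used $x_n = -|y-x|(\bs e_{xy},\bs e_n)$ on the boundary $\{y_n = 0\}$. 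This is precisely the contraction vector $\bs e_n - n(\bs e_{xy},\bs e_n)\bs e_{xy}$ of Lemma~\ref{L_1}, now paired against $\bs u(y)$ rather than against a direction $\bs\ell$; recognizing this reuse is the one conceptual step.

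Next I would read off the sharp coefficient. Since $\mathrm{div}\,\bs u(x)$ is linear in the boundary data and the data ranges over all of $[L^\infty(\partial\mathbb{R}^n_+)]^n$ with norm at most one, the supremum of $|\mathrm{div}\,\bs u(x)|$ is attained by pointing $\bs u(y)$ along the fixed field $\bs w(y) := \bs e_n - n(\bs e_{xy},\bs e_n)\bs e_{xy}$, that is, by the admissible boundary datum $\bs u(y) = \bs w(y)/|\bs w(y)|$, which is bounded and well defined because $|\bs w|\ge 1$. Hence
$$\mathcal{D}_n(x) = \frac{2}{\omega_n}\int_{\partial\mathbb{R}^n_+}\frac{\big|\bs e_n - n(\bs e_{xy},\bs e_n)\bs e_{xy}\big|}{|y-x|^n}\,dy'.$$
A short computation using $(\bs e_{xy},\bs e_n)^2 = \cos^2\vartheta$, where $\cos\vartheta = x_n/|y-x|$, gives $|\bs w| = [1 + n(n-2)\cos^2\vartheta]^{1/2}$.

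Finally I would evaluate the integral by passing to polar coordinates $dy' = \omega_{n-1}\rho^{n-2}\,d\rho$ in the boundary hyperplane (with $\rho = |y'-x'|$) and making the substitution $\rho = x_n\tan\vartheta$, which sends $|y-x|^{-n}$ to $\cos^n\vartheta\,x_n^{-n}$ and collapses the surplus powers of $\cos\vartheta$ and $\tan\vartheta$ exactly to $\sin^{n-2}\vartheta$. This yields $\mathcal{D}_n(x) = D_n x_n^{-1}$ with $D_n$ as in (\ref{Eq_5.3}). The special values follow by quadrature: for $n=2$ the integrand reduces to a constant and $D_2 = 1$, while for $n=3$ the substitution $t = \cos\vartheta$ turns the integral into $\int_0^1\sqrt{1+3t^2}\,dt$, giving $D_3 = 1 + \tfrac{\sqrt 3}{6}\ln(2+\sqrt 3)$. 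The only points requiring care are the sharpness justification---verifying that the direction field $\bs w/|\bs w|$ is a genuine admissible $L^\infty$ boundary datum so that equality is attained---and keeping the trigonometric bookkeeping in the change of variables correct; both are routine.
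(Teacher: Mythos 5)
Your proposal is correct and follows essentially the same route as the paper: differentiate the Poisson representation, sum the diagonal entries to write $\mathrm{div}\,\bs u(x)$ as an integral of $\big(\bs e_n - n(\bs e_{xy},\bs e_n)\bs e_{xy},\,\bs u(y)\big)$ against the kernel $2\omega_n^{-1}|y-x|^{-n}$, read off the sharp coefficient as the integral of the modulus of that vector, and use $|\bs e_n - n(\bs e_{xy},\bs e_n)\bs e_{xy}|^2 = 1+n(n-2)(\bs e_{xy},\bs e_n)^2$. The only immaterial difference is in the final quadrature: you use polar coordinates in the boundary hyperplane with $\rho = x_n\tan\vartheta$, while the paper first pulls the integral back to the hemisphere ${\mathbb S}^{n-1}_-$ and invokes the standard formula for spherical integrals of a function of one inner product; your explicit extremal datum $\bs w/|\bs w|$ (admissible since $|\bs w|\geq 1$) is a detail the paper leaves implicit.
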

\begin{proof} By (\ref{Eq_3.2}),
\begin{eqnarray} \label{Eqn_3.2}
{\rm div}\;\bs u&\!\!\!\!=\!\!\!\!&{2\over \omega _n}\sum_{j=1}^n\int _{\partial {\mathbb R}^n_{+}}u_j(y){\partial \over \partial x_j}\left ( {x_n\over |y-x|^n}\right )dy'=\nonumber\\
& &\nonumber\\
& &{2\over \omega _n}\sum_{j=1}^n\int _{\partial {\mathbb R}^n_{+}}
\left ( {\delta_{jn}\over |y-x|^n}+{nx_n(y_j-x_j)\over |y-x|^{n+2}}\right )u_j(y)dy'=\nonumber\\
& &\nonumber\\
& &{2\over \omega _n}\sum_{j=1}^n\int _{\partial {\mathbb R}^n_{+}}
\left ( {\delta_{jn}-n(\bs e_{xy}, \bs e_n)(\bs e_{xy}, \bs e_j)\over |y-x|^n}\right ) u_j(y)dy'\;,
\end{eqnarray}
which implies
\begin{equation} \label{Eq_5.4}
{\rm div}\;\bs u={2\over \omega _n}\int _{\partial {\mathbb R}^n_{+}}
 {\big (\bs e_n-n(\bs e_{xy}, \bs e_n)\bs e_{xy},\;\bs u(y)\big ) \over |y-x|^n} dy'\;.
\end{equation}
This equality shows that the sharp coefficient ${\mathcal D}_n(x)$ in (\ref{Eq_5.1})
is represented in the form
$$
{\mathcal D}_n(x)={2\over \omega _n}\int _{\partial {\mathbb R}^n_{+}}
{\big | \bs e_n-n(\bs e_{xy}, \bs e_n)\bs e_{xy} \big | \over |y-x|^n} dy'\;.
$$
Then
\begin{equation} \label{Eq_5.5}
{\mathcal D}_n(x)={2\over \omega _nx_n}\int _{\partial {\mathbb R}^n_{+}}
\big | \bs e_n-n(\bs e_{xy}, \bs e_n)\bs e_{xy} \big |{x_n \over |y-x|^n} dy'={D_n \over x_n}\;,
\end{equation}
where
\begin{equation} \label{Eq_5.6}
D_n={2\over \omega _n}\int _{{\mathbb S}^{n-1}_{-}}
\big | \bs e_n-n(\bs e_\sigma, \bs e_n)\bs e_\sigma \big |d\sigma=
{1\over \omega _n}\int _{{\mathbb S}^{n-1}}
\big | \bs e_n-n(\bs e_\sigma, \bs e_n)\bs e_\sigma \big |d\sigma\;.
\end{equation}
The identity
\begin{equation} \label{Eq_5.6A}
\big | \bs e_n-n(\bs e_\sigma, \bs e_n)\bs e_\sigma \big |^2=1+n(n-2)(\bs e_\sigma, \bs e_n)^2\;,
\end{equation}
along with (\ref{Eq_5.6}) leads to the formula
\begin{equation} \label{Eq_5.6AB}
D_n={1\over \omega _n}\int _{{\mathbb S}^{n-1}}
\Big ( 1+n(n-2)(\bs e_\sigma, \bs e_n)^2 \Big )^{1/2}d\sigma\;.
\end{equation}
Using
\begin{equation} \label{Eq_5.7B}
\int _{{\mathbb S}^{n-1}}f\big ( (\bs \xi, \bs y) \big ) d\sigma _y=
\omega _{n-1}\int_{-1}^{1}f\big ( |\bs \xi|t \big )(1-t^2)^{(n-3)/2}dt
\end{equation}
(see, e.g., \cite{PBM3}, \textbf{4.3.2(2)}) and the
change of variable $t=\cos \vartheta$, we obtain
\begin{equation} \label{Eq_5.7A}
\int _{{\mathbb S}^{n-1}}\Big ( 1+n(n-2)(\bs e_\sigma, \bs e_n)^2 \Big )^{1/2}d\sigma=
2\omega_{n-1}\int_0^{\pi/2}
\big [1+n(n-2)\cos^2\vartheta \big ]^{1/2}\sin^{n-2}\vartheta d\vartheta\;.
\end{equation}
By (\ref{Eq_5.5}), (\ref{Eq_5.6AB}) and (\ref{Eq_5.7A}),
we arrive at (\ref{Eq_5.2}) and (\ref{Eq_5.3}).
\end{proof}

The next assertion is analogous to Proposition 2. Here the divergence replaces
the directional derivative.

\begin{proposition} \label{P_7} 
Let $\Omega={\mathbb R}^n\backslash {\overline G}$, where $G$ be a 
convex subdomain of ${\mathbb R}^n$, and let $\bs u$ be a $n$-component 
vector-valued function with bounded harmonic components in $\Omega$.
Then for any point $x\in \Omega$ the inequality
\begin{equation} \label{Eq_5.7BB}
\left |{\rm div}\;\bs u (x)\right |\leq {D_n \over d_x}\;\sup_\Omega |\bs u|  
\end{equation}
holds, where the constant $D_n$ is the same as in Proposition $\ref{P_6}$.
\end{proposition}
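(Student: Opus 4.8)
The plan is to reduce the global estimate on $\Omega$ to the half-space estimate of Proposition~\ref{P_6}, following the pattern by which Proposition~\ref{P_4} was obtained from Proposition~\ref{P_3}. First I would fix an arbitrary point $x\in\Omega$ and pick $\xi\in\partial\Omega$ to be a nearest point of $\partial\Omega=\partial G$ to $x$, so that $d_x=|x-\xi|$. Since $G$ is convex and $\xi$ realizes the distance from $x$ to $G$, the hyperplane $T(\xi)$ passing through $\xi$ and orthogonal to the segment $[x,\xi]$ is a supporting hyperplane of $G$; consequently the open half-space $\mathbb{R}^n_\xi$ bounded by $T(\xi)$ and containing $x$ is disjoint from $\overline G$, whence $\mathbb{R}^n_\xi\subset\Omega$, and $d_x$ equals the distance from $x$ to the bounding hyperplane $T(\xi)$. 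This is precisely the geometric set-up already used in the proofs of Theorem~\ref{T_0} and Proposition~\ref{P_4}.

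Next I would invoke Proposition~\ref{P_6} on $\mathbb{R}^n_\xi$. The components of $\bs u$ are bounded and harmonic on $\mathbb{R}^n_\xi\subset\Omega$, so $\bs u\in[h^\infty(\mathbb{R}^n_\xi)]^n$ and is given there by the Poisson integral (\ref{Eq_3.2}). Because the operator ${\rm div}$ is invariant under rigid motions, a rotation and translation carrying $\mathbb{R}^n_\xi$ onto the standard half-space $\mathbb{R}^n_+$ (sending $\xi$ to the origin and $x$ to a point at height $x_n=d_x$) turns Proposition~\ref{P_6} into the estimate
$$
\left|{\rm div}\,\bs u(x)\right|\leq \frac{D_n}{d_x}\,\|\bs u\|_{[h^\infty(\mathbb{R}^n_\xi)]^n},
$$
with $D_n$ given by (\ref{Eq_5.3}).

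Finally, the inclusion $\mathbb{R}^n_\xi\subset\Omega$ yields the obvious bound $\|\bs u\|_{[h^\infty(\mathbb{R}^n_\xi)]^n}\leq\sup_\Omega|\bs u|$, and substituting this into the previous display gives (\ref{Eq_5.7BB}) with the same best constant $D_n$ as in Proposition~\ref{P_6}. I do not expect any genuine obstacle here: the argument is a verbatim transcription of the proof of Proposition~\ref{P_4} with the divergence in place of the directional derivative. The only points deserving care are the two geometric observations in the first paragraph---that convexity of $G$ forces the supporting half-space $\mathbb{R}^n_\xi$ to lie inside $\Omega$, and that the nearest-point property identifies $d_x$ with the distance to $T(\xi)$---together with the rigid-motion invariance of ${\rm div}$ that allows Proposition~\ref{P_6} to be transferred from $\mathbb{R}^n_+$ to the tilted half-space $\mathbb{R}^n_\xi$.
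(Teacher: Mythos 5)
Your proposal is correct and follows exactly the paper's own argument: take the nearest boundary point $\xi$, use convexity of $G$ to place the supporting half-space $\mathbb{R}^n_\xi$ inside $\Omega$, apply Proposition~\ref{P_6} there (after a rigid motion, under which the divergence and the norm $|\bs u|$ are invariant), and finish with $\|\bs u\|_{[h^\infty(\mathbb{R}^n_\xi)]^n}\leq\sup_\Omega|\bs u|$. The extra geometric detail you supply is accurate but the route is identical to the paper's proof of Proposition~\ref{P_7}.
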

\begin{proof} Let $\xi \in \partial\Omega$ be a point at $\partial\Omega$ nearest to
$x \in \Omega$. Let the notation 
${\mathbb R}^n_\xi$ be the same as in the proof of Theorem 1.
By Proposition \ref{P_6}, 
$$
\left |{\rm div}\;\bs u (x)\right |\leq {D_n\over d_x}\;
||\bs u||_{[h^\infty({\mathbb R}^n_\xi)]^n}\;,
$$
where $D_n$ is defined by (\ref{Eq_5.3}). Then by (\ref{Eq_ine}) with $m=n$,
we arrive at (\ref{Eq_5.7BB}).
\end{proof}

\setcounter{theorem}{0}
\begin{lemma} \label{L_1A}
Let $\bs u \in [h^\infty({\mathbb B})]^n$, and let $x $ be an arbitrary point in ${\mathbb B}$. 
The sharp coefficient ${\cal T}_n(x)$ in the inequality 
\begin{equation} \label{Eq_5.7}
|{\rm div}\;\bs u|\leq {\cal T}_n( x)||\bs u||_{[L^\infty({\partial\mathbb B})]^n}
\end{equation}
is given by
\begin{equation} \label{Eq_5.8}
{\mathcal T}_n(x)={ 2^{n-1}\omega_{n-1} \over \omega_n(1+r)^{n-1}(1-r)}
\int_0^{\pi/2} {\left [ \big (n-(n-2)r \big )^2 +4n(n-2)r\cos^2\vartheta \right ]^{1/2} 
\over \left [1+\left ({1-r \over 1+r} \right )^2 \tan^2 \vartheta \right ]^{(n-2)/2}}
\sin^{n-2}\vartheta\;d\vartheta \;.
\end{equation} 

In particular,
$$
{\mathcal T}_2(x)={2 \over 1-r^2}\;,\;\;\;\;\;\;\;\;\;\;\;
{\mathcal T}_3(x)={1 \over 1-r^2}\left (2+{3-r^2 \over 2\sqrt{3} r}\ln{\sqrt{3}+r \over
\sqrt{3} -r} \right )\;.
$$
\end{lemma}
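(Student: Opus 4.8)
The plan is to follow the scheme of Lemma~\ref{P_1} and Proposition~\ref{P_6}, combining Poisson-integral differentiation on the ball with the observation that, for a full $n$-component field, the extremal boundary datum can be aligned pointwise with the kernel vector, so that no auxiliary supremum over $\bs\ell$ survives.

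First I would start from the representation (\ref{Eq_3.6}) and differentiate under the integral sign exactly as in the proof of Lemma~\ref{L_2}, but summing the diagonal derivatives $\partial u_j/\partial x_j$. This gives
$$
{\rm div}\,\bs u(x)=\frac{1}{\omega_n}\int_{{\mathbb S}^{n-1}}\frac{\big(\bs w(y),\,\bs u(y)\big)}{|y-x|^{n+2}}\,d\sigma_y,\qquad \bs w(y):=n(1-r^2)(y-x)-2|y-x|^2x.
$$
Since $\bs u$ ranges over all fields with $\|\bs u\|_{[L^\infty(\partial{\mathbb B})]^n}\le 1$, the supremum of $|{\rm div}\,\bs u(x)|$ is attained by taking $\bs u(y)$ to be the unit vector along $\bs w(y)$; hence, in contrast with the gradient case, the sharp coefficient is simply
$$
{\cal T}_n(x)=\frac{1}{\omega_n}\int_{{\mathbb S}^{n-1}}\frac{|\bs w(y)|}{|y-x|^{n+2}}\,d\sigma_y,
$$
which is automatically sharp, being the norm of the functional $\bs u\mapsto{\rm div}\,\bs u(x)$ on $[L^\infty(\partial{\mathbb B})]^n$.

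Next I would place the origin so that $x=r\bs e_n$ and split $\bs w$ into its $\bs e_n$-component and its orthogonal part. The tangential part is $n(1-r^2)y'$, while the normal component factors as $a_n(r)\big(y_n-c_n(r)\big)$ with $a_n,b_n,c_n$ from (\ref{EQ_2}), so that $|\bs w(y)|=a_n(r)\big[b_n(r)^2|y'|^2+(y_n-c_n(r))^2\big]^{1/2}$. The decisive simplification, absent in Lemma~\ref{P_1}, is that $|y'|^2=1-y_n^2$, so the integrand depends on $y$ only through $y_n$. The zonal formula (\ref{Eq_5.7B}) then collapses the surface integral to a single integral in $\theta$, with $y_n=\cos\theta$:
$$
{\cal T}_n(x)=\frac{a_n(r)\,\omega_{n-1}}{\omega_n}\int_0^\pi\frac{\big[b_n(r)^2\sin^2\theta+(\cos\theta-c_n(r))^2\big]^{1/2}}{(1-2r\cos\theta+r^2)^{(n+2)/2}}\sin^{n-2}\theta\,d\theta.
$$

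Finally I would apply the substitution $\theta=2\arctan\!\big(\tfrac{1-r}{1+r}\tan\vartheta\big)$ together with (\ref{S_1})--(\ref{S_4}); reading off the $\ell_\tau$- and $\ell_\nu$-coefficients in (\ref{S_4}) yields closed forms for $b_n(r)\sin\theta$ and $\cos\theta-c_n(r)$, whence the bracket under the root becomes $\tfrac{(1-r)^4}{Q^2}\big[n^2\sin^2 2\vartheta+(n\cos2\vartheta+(n-2)r)^2\big]$ with $Q=a_n(r)\cos^2\vartheta\,[1+(\tfrac{1-r}{1+r})^2\tan^2\vartheta]$. The algebraic identity $n^2\sin^2 2\vartheta+(n\cos2\vartheta+(n-2)r)^2=(n-(n-2)r)^2+4n(n-2)r\cos^2\vartheta$ turns this into precisely the square root in (\ref{Eq_5.8}); collecting the resulting powers of $(1-r)$, $(1+r)$ and of $1+(\tfrac{1-r}{1+r})^2\tan^2\vartheta$ makes the factor $a_n(r)$ cancel, and (\ref{Eq_5.8}) follows. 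The only real obstacle is this bookkeeping in the change of variables and the recognition of the collapsing identity; once the dependence on $y_n$ alone is noticed the rest is mechanical, and the special values ${\cal T}_2(x)$ and ${\cal T}_3(x)$ come from evaluating the resulting elementary integrals.
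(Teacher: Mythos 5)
Your proposal is correct and follows essentially the same route as the paper's proof: differentiate the Poisson integral (\ref{Eq_3.6}), identify ${\cal T}_n(x)$ as the integral of the modulus of the vector-valued kernel (no auxiliary supremum over directions, since the extremal datum aligns pointwise with the kernel), observe that the integrand depends on $y$ only through $y_n$ so that (\ref{Eq_5.7B}) collapses the surface integral to a single one, and finish with the substitution $\theta=2\arctan\big(\frac{1-r}{1+r}\tan\vartheta\big)$. The only difference is bookkeeping: the paper computes the modulus directly as $|y-x|\big(4r^2+a^2(r)-4a(r)(x,y)\big)^{1/2}$ with $a(r)=2r^2+n(1-r^2)$, while you route through the tangential--normal decomposition and $(\ref{S_4})$ from Lemma $\ref{P_1}$ --- both lead to the same single integral and the same closing identity.
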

\begin{proof} Let us fix a point $x \in {\mathbb B}$. By (\ref{Eq_3.6}) we have 
$$
{\partial u_j \over \partial x_j}={1 \over \omega _n }\int _{{\mathbb S}^{n-1}}
\left [ {-2x_{j} \over |y-x|^n} + 
{n \left ( 1-r^2 \right ) (y_j-x_j)\over |y-x|^{n+2}}\right ] u_j(y)d\sigma _y.
$$
Therefore,
$$
{\rm div}\;\bs u={1 \over \omega _n }\int _{{\mathbb S}^{n-1}}
\left ( {-2x \over |y-x|^n} + 
{n \left ( 1-r^2 \right ) (y-x)\over |y-x|^{n+2}},\;\bs u(y)\right ) d\sigma _y.
$$
This implies that the sharp coefficient ${\cal T}_n( x)$ in (\ref{Eq_5.7})
has the form
$$
{\cal T}_n( x)={1 \over \omega _n }\int _{{\mathbb S}^{n-1}}{\big |-2x |y-x|^2+ n ( 1-r^2 ) (y-x) \big |\over |y-x|^{n+2}} d\sigma _y\;,
$$
which leads to the formula
\begin{equation} \label{Eq_5.10}
{\cal T}_n( x)={1 \over \omega _n }\int _{{\mathbb S}^{n-1}}{\big (4r^2+a^2(r)-4a(r)(x, y) \big )^{1/2}\over \big ( 1-2(x, y)+r^2 \big )^{(n+1)/2}} d\sigma _y\;,
\end{equation}
where $a(r)=2r^2+n(1-r^2)$. Transforming the integral in (\ref{Eq_5.10}) with help
of (\ref{Eq_5.7B}), we obtain
$$
{\cal T}_n( x)={\omega _{n-1} \over \omega _n }\int_{-1}^{1}
{\big (4r^2+a^2(r)-4ra(r)t \big )^{1/2}\over 
\big ( 1-2rt+r^2 \big )^{(n+1)/2}}(1-t^2)^{(n-3)/2}dt \;.
$$
Changing the variable $t=\cos \theta$, we derive
\begin{equation} \label{Eq_5.11}
{\cal T}_n( x)={\omega _{n-1} \over \omega _n }\int_{0}^{\pi}
{\big (4r^2+a^2(r)-4ra(r)\cos \theta \big )^{1/2}\over 
\big ( 1-2r\cos \theta+r^2 \big )^{(n+1)/2}}\sin^{n-2} \theta d\theta \;.
\end{equation}
Finally, setting 
$$
\theta=2\arctan \left ( {1-r \over 1+r}\tan \vartheta \right )
$$ 
in (\ref{Eq_5.11}) and using (\ref{S_1})-(\ref{S_3}), we arrive at (\ref{Eq_5.8}).
\end{proof} 

By dilation in Lemma \ref{L_1A}, we obtain   

\begin{lemma} \label{L_2A}
Let $\bs u \in [h^\infty({\mathbb B}_R)]^n$, and let $x $ be an arbitrary point in 
${\mathbb B}_R$. The sharp coefficient ${\cal T}_{n, R}(x)$ in the inequality
$$
|{\rm div}\; \bs u(x)|\leq {\cal T}_{n, R}( x)||\bs u||_{[L^\infty(\partial{\mathbb B}_R)]^n}
$$
is given by
$$
{\mathcal T}_{n, R}(x)={ 2^{n-1}\omega_{n-1}R^{n-1} \over \omega_n(R+|x|)^{n-1}(R-|x|)}
\int_0^{\pi/2} Q_n \left ( \vartheta;\; {|x|\over R} \right )\sin^{n-2}\vartheta\;d\vartheta \;,
$$
where 
$$
Q_n( \vartheta; r)={\left [ \big (n-(n-2)r \big )^2 +4n(n-2)r\cos^2\vartheta \right ]^{1/2} 
\over \left [1+\left ({1-r \over 1+r} \right )^2 \tan^2 \vartheta \right ]^{(n-2)/2}}\;.
$$
\end{lemma}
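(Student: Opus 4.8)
The plan is to reduce Lemma \ref{L_2A} to Lemma \ref{L_1A} by a simple change of scale, exactly as the sentence preceding the statement indicates. Given $\bs u \in [h^\infty({\mathbb B}_R)]^n$ and a fixed point $x \in {\mathbb B}_R$, I would introduce the rescaled field $\bs v(\xi)=\bs u(R\xi)$ for $\xi \in {\mathbb B}$. Harmonicity is invariant under the dilation $\xi \mapsto R\xi$, so each component of $\bs v$ is a bounded harmonic function on ${\mathbb B}$ and $\bs v \in [h^\infty({\mathbb B})]^n$. Since $\xi \mapsto R\xi$ maps $\partial{\mathbb B}$ bijectively onto $\partial{\mathbb B}_R$, the boundary norms agree: $\|\bs v\|_{[L^\infty(\partial{\mathbb B})]^n}=\|\bs u\|_{[L^\infty(\partial{\mathbb B}_R)]^n}$.

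The key point is the scaling of the divergence. By the chain rule ${\rm div}_\xi\,\bs v(\xi)=R\,({\rm div}\;\bs u)(R\xi)$, so evaluating at $\xi=x/R$ yields $|{\rm div}\;\bs u(x)|=R^{-1}|{\rm div}_\xi\,\bs v(x/R)|$. The point $x/R$ lies in ${\mathbb B}$ because $|x|/R<1$, so Lemma \ref{L_1A} applies to $\bs v$ at $x/R$ and gives
$$
|{\rm div}\;\bs u(x)|\leq R^{-1}{\cal T}_n(x/R)\,\|\bs u\|_{[L^\infty(\partial{\mathbb B}_R)]^n},
$$
which identifies ${\cal T}_{n,R}(x)=R^{-1}{\cal T}_n(x/R)$.

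It then remains to rewrite this in the stated closed form, substituting $r=|x|/R$ into (\ref{Eq_5.8}). The integrand $Q_n(\vartheta;r)$ depends on $r$ alone (through both the numerator $(n-(n-2)r)^2+4n(n-2)r\cos^2\vartheta$ and the denominator factor $((1-r)/(1+r))^2$), so after the substitution it becomes $Q_n(\vartheta;|x|/R)$ with no further change. The only computation worth recording is the prefactor simplification
$$
(1+|x|/R)^{n-1}(1-|x|/R)=R^{-n}(R+|x|)^{n-1}(R-|x|),
$$
whence the $R^{-1}$ from the divergence scaling combines with the $R^{-n}$ in the denominator to produce exactly the $R^{n-1}$ numerator factor in the asserted expression. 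This step, the bookkeeping of powers of $R$, is the only place requiring any care; there is no genuine analytic obstacle.

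Finally, sharpness is inherited from Lemma \ref{L_1A}: the dilation sets up an invertible, boundary-norm-preserving correspondence between $[h^\infty({\mathbb B}_R)]^n$ and $[h^\infty({\mathbb B})]^n$, so any admissible coefficient for ${\mathbb B}_R$ smaller than $R^{-1}{\cal T}_n(x/R)$ would, upon reversing the scaling, yield an admissible coefficient below ${\cal T}_n(\cdot)$ at $x/R$, contradicting the sharpness established in Lemma \ref{L_1A}. Hence ${\cal T}_{n,R}(x)$ cannot be diminished, completing the proof.
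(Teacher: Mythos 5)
Your proof is correct and follows exactly the route the paper intends: the paper offers no written argument beyond the phrase ``By dilation in Lemma \ref{L_1A}, we obtain,'' and your proposal simply fills in the details of that dilation (the scaling $\mathrm{div}_\xi\,\bs v(\xi)=R\,(\mathrm{div}\,\bs u)(R\xi)$, the preservation of boundary norms and of sharpness, and the bookkeeping of powers of $R$), all of which check out.
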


\setcounter{theorem}{5}
\begin{proposition} \label{T_1AA}
Let $\Omega$ be a domain in ${\mathbb R}^n$ and
let $\bs{\mathfrak U}(\Omega)$ be the set of $n$-component vector-valued functions 
$\bs u$ whose components are harmonic in $\Omega$, and $\sup_\Omega |\bs u|\leq 1$.
Suppose that a point $\xi \in \partial\Omega$ can be touched by an interior ball $B$. Then
$$
\limsup _{x\rightarrow \xi}\sup_{\bs u\in \bs{\mathfrak U}(\Omega)}
|x-\xi|\big |{\rm div}\;\bs u(x) \big | \leq D_n\;,
$$
where $x$ is a point of the radius of $B$ directed from the center to $\xi$. 
Here the constant $D_n$ is the same as in Proposition $\ref{P_6}$.
\end{proposition}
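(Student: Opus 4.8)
The plan is to follow the pattern of the proofs of Theorem 2 and Proposition~\ref{P_5}, with the sharp gradient coefficient replaced by the sharp divergence coefficient furnished by Lemma~\ref{L_2A}. First I would fix the interior ball $B$ of radius $R$ touching $\partial\Omega$ at $\xi$, place the origin ${\mathcal O}$ at its center, and restrict attention to points $x$ on the segment joining ${\mathcal O}$ to $\xi$, so that $R-|x|=|x-\xi|$. For any $\bs u\in\bs{\mathfrak U}(\Omega)$ one has $\bs u\in[h^\infty(B)]^n$ with $||\bs u||_{[h^\infty(B)]^n}\leq\sup_\Omega|\bs u|\leq 1$, so Lemma~\ref{L_2A} applied on $B={\mathbb B}_R$ gives
$$
(R-|x|)\big|{\rm div}\,\bs u(x)\big|\leq (R-|x|)\,{\cal T}_{n,R}(x).
$$
It then suffices to prove that the right-hand side tends to $D_n$ as $|x|\to R$.

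The core of the argument is the limit computation. Writing $r=|x|/R$ and multiplying the expression for ${\cal T}_{n,R}(x)$ from Lemma~\ref{L_2A} by $(R-|x|)$, the singular factor $(R-|x|)^{-1}$ cancels and one is left with
$$
(R-|x|)\,{\cal T}_{n,R}(x)=\frac{2^{n-1}\omega_{n-1}R^{n-1}}{\omega_n(R+|x|)^{n-1}}\int_0^{\pi/2}Q_n\!\left(\vartheta;\tfrac{|x|}{R}\right)\sin^{n-2}\vartheta\,d\vartheta.
$$
As $r\to 1$ the prefactor converges to $\omega_{n-1}/\omega_n$, while in $Q_n(\vartheta;r)$ the numerator $[(n-(n-2)r)^2+4n(n-2)r\cos^2\vartheta]^{1/2}$ tends to $2[1+n(n-2)\cos^2\vartheta]^{1/2}$ and the denominator $[1+((1-r)/(1+r))^2\tan^2\vartheta]^{(n-2)/2}$ tends to $1$ pointwise. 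The candidate limit is therefore
$$
\frac{\omega_{n-1}}{\omega_n}\int_0^{\pi/2}2\big[1+n(n-2)\cos^2\vartheta\big]^{1/2}\sin^{n-2}\vartheta\,d\vartheta=D_n,
$$
which is precisely the constant of Proposition~\ref{P_6} in $(\ref{Eq_5.3})$; in particular the limit inherits the sharp half-space divergence constant.

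The main obstacle is justifying the passage to the limit under the integral sign, since the pointwise convergence of the denominator fails to be uniform near $\vartheta=\pi/2$, where $\tan\vartheta\to\infty$ and for each fixed $r<1$ the factor $[1+((1-r)/(1+r))^2\tan^2\vartheta]^{(n-2)/2}$ still suppresses the integrand in a shrinking neighbourhood of $\pi/2$. I would dispose of this by dominated convergence: for $r\in[0,1]$ the numerator is bounded by the $r$-independent constant $[n^2+4n(n-2)]^{1/2}$ and the denominator is $\geq 1$, so $Q_n(\vartheta;r)\sin^{n-2}\vartheta$ admits an $r$-independent integrable majorant on $(0,\pi/2)$, which legitimizes exchanging the limit and the integral. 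This is the same mechanism used implicitly for the gradient in $(\ref{Eq_2.1.8})$–$(\ref{Eq_2.1.9})$, so no new difficulty arises.

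Finally I would assemble the pieces exactly as in the proof of Theorem 2: combining the displayed inequality with $\lim_{|x|\to R}(R-|x|){\cal T}_{n,R}(x)=D_n$ yields
$$
\limsup_{|x|\to R}\sup\big\{(R-|x|)\big|{\rm div}\,\bs u(x)\big|:\ ||\bs u||_{[h^\infty({\mathbb B}_R)]^n}\leq 1\big\}\leq D_n,
$$
and translating back through $R-|x|=|x-\xi|$ together with $||\bs u||_{[h^\infty(B)]^n}\leq\sup_\Omega|\bs u|$ gives the asserted bound uniformly over $\bs u\in\bs{\mathfrak U}(\Omega)$.
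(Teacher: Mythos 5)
Your proposal is correct and follows essentially the same route as the paper: apply Lemma~\ref{L_2A}, show $\lim_{|x|\to R}(R-|x|)\,{\cal T}_{n,R}(x)=D_n$, and conclude via the geometric setup of Theorem 2 together with $||\bs u||_{[h^\infty(B)]^n}\leq\sup_\Omega|\bs u|$. The only difference is that you make explicit the dominated-convergence justification for the limit under the integral, which the paper leaves implicit.
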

\begin{proof}
By Lemma \ref{L_2A}, the relations
\begin{equation} \label{Eq_5.12}
\limsup_{|x|\rightarrow R}\;\sup \big \{ (R-|x|)|{\rm div}\;\bs u(x)\big |: 
||\bs u||_{[h^\infty({\mathbb B}_R)]^n} \leq 1 \big \} \leq \lim_{|x|\rightarrow R}
\;(R-|x|){\cal T}_{n, R}( x)= D_n
\end{equation} 
hold, where $D_n$ is the same as in Proposition $\ref{P_6}$.

Using the notation introduced in Theorem 2, by (\ref{Eq_5.12}) and (\ref{Eq_est}) with 
$m=n$ the result follows.
\end{proof}

\setcounter{equation}{0}
\section{Estimates for the divergence of an elastic displacement field and 
the pressure in a fluid}

Let $[{\rm C}_{\rm b}(\partial{\mathbb R}^n_+)]^n$ 
be the space of vector-valued functions
with $n$ components which are bounded and continuous on $\partial{\mathbb R}^n_+$.
This space is endowed with the norm $||\bs u||_{[{\rm C}_{\rm b}(\partial{\mathbb R}^n_+)]^n}
=\sup \{|\bs u(x)|: x \in \partial{\mathbb R}^n_+ \}$.

In the half-space ${\mathbb R}^n _{+},\; n\geq 2,$ consider the Lam\'e system
\begin{equation} \label{Eq_4.1}
\Delta \bs u +(1-2\sigma)^{-1}\;{\rm grad}\;{\rm div}\; \bs u=\bs 0\;,
\end{equation}
and the Stokes system
\begin{equation}  \label{Eq_4.2}
\Delta \bs u -{\rm grad}\; p=\bs 0\;,\;\;\;\;\;\;\;{\rm div}\;\bs u=0\;,
\end{equation}
with the boundary condition
\begin{equation} \label{Eq_4.3}
\bs u\big |_{x_{n}=0}=\bs f,
\end{equation}
where $\sigma$ is the Poisson coefficient,
$\bs f \in [{\rm C}_{\rm b}(\partial {\mathbb R}^n_{+})]^n$, 
$\bs u=(u_1,\dots ,u_n)$ is the displacement vector of an elastic medium or the 
velocity vector of a fluid, and $p(x)$ is the pressure in the fluid vanishing as
$x_n\rightarrow \infty$. 

We assume that $ \sigma \in (-\infty, 1/2)\cup(1, +\infty)$ which means the strong 
ellipticity of system (\ref{Eq_4.1}). By $\lambda$ and $\mu$ we denote the 
Lam\'e constants. Since $\sigma=\lambda /2(\lambda+\mu)$ 
the strong ellipticity is equivalent to the inequalities
$\mu>0, \lambda+\mu>0$ and $-\mu<\lambda+\mu<0$. 

\smallskip
A unique solution  $\bs u\in [{\rm C}^{2}({\mathbb R}^n _{+})]^n \cap [{\rm C}_{\rm b}
(\overline {{\mathbb R}^n _{+}})]^n$ of problem (\ref{Eq_4.1}), (\ref{Eq_4.3}) 
and the vector component $\bs u\in [{\rm C}^{2}({\mathbb R}^n _{+})]^n 
\cap [{\rm C}_{\rm b}(\overline {{\mathbb R}^n _{+}})]^n$
of a solution $\{ \bs u, p \}$ to problem (\ref{Eq_4.2}), (\ref{Eq_4.3}) 
admit the representation (see, e.g., \cite{KM15}, pp. 64-65)
\begin{equation} \label{Eq_4.4}
\bs u(x)=\int _{\partial {\mathbb R}^n_{+}}{\cal H}\left ({y-x\over |y-x|}\right )
{x_n\over |y-x|^n}\bs f(y')dy',
\end{equation}
where $x\in {\mathbb R}^n _{+}$, $y=(y', 0)$, $y' \in {\mathbb R}^{n-1}$.
Here ${\mathcal H}$ is the $(n\times n)$-matrix-valued function on ${\mathbb S}^{n-1}$ 
with elements
\begin{equation} \label{Eq_4.5}
{2\over \omega _n}\left ( (1-\kappa )\delta _{jk}+n\kappa
{(y_{j}-x_{j})(y_{k}-x_{k})\over |y-x|^2}\right ),
\end{equation}
where $\kappa =1$ for the Stokes system and $\kappa =(3-4\sigma)^{-1}$
for the Lam\'e system.

\begin{proposition} \label{P_9} {\rm (i)} Let $\bs u\in [{\rm C}^{2}({\mathbb R}^n _{+})]^n 
\cap [{\rm C}_{\rm b}(\overline {{\mathbb R}^n _{+}})]^n$ be a solution 
of the Lam\'e system in ${\mathbb R}^n _{+}$.
The sharp coefficient ${\mathcal E}_n(x)$ in the inequality
\begin{equation} \label{Eq_4.7}
\left |{\rm div}\;\bs u (x)\right |\leq
{\mathcal E}_n(x)||\bs u||_{[{\rm C}_{\rm b}(\partial{\mathbb R}^n_+)]^n}
\end{equation}
is given by
\begin{equation} \label{Eq_4.8}
{\mathcal E}_n(x)={1-2\sigma\over 3-4\sigma}E_n x_n^{-1}\;,
\end{equation}
where
\begin{equation} \label{Eq_4.9}
E_n={4 \omega_{n-1} \over \omega_n}\int_0^{\pi/2}
\big [1+n(n-2)\cos^2\vartheta \big ]^{1/2}\sin^{n-2}\vartheta d\vartheta\;.
\end{equation}

In particular, 
$$
E_2=2\;,\;\;\;\;\;\;\;\;E_3=2\left (1+{\sqrt{3} \over 6}
\ln\big ( 2+\sqrt{3}\big ) \right )\;.
$$

{\rm (ii)} Let $\bs u\in [{\rm C}^{2}({\mathbb R}^n _{+})]^n 
\cap [{\rm C}_{\rm b}(\overline {{\mathbb R}^n _{+}})]^n$ be the vector component of a
solution $\{ \bs u, p \}$ of the Stokes system $(\ref{Eq_4.2})$ in ${\mathbb R}^n _{+}$ 
and $p(x)$ be the pressure vanishing as $x_n\rightarrow \infty$. 
The sharp coefficient ${\mathcal S}_n(x)$ in the inequality
\begin{equation} \label{Eq_4.14}
\left |p(x)\right |\leq
{\mathcal S}_n(x)||\bs u||_{[{\rm C}_{\rm b}(\partial{\mathbb R}^n_+)]^n}
\end{equation}
is given by
\begin{equation} \label{Eq_4.15}
{\mathcal S}_n(x)=E_n x_n^{-1}\;,
\end{equation}
where the constant $E_n$ is defined by $(\ref{Eq_4.9})$.
\end{proposition}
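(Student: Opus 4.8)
The plan is to reduce both statements to the harmonic divergence estimate of Proposition \ref{P_6} by splitting the kernel (\ref{Eq_4.5}) into its isotropic and dyadic parts. Writing $\bs e_{xy}=(y-x)/|y-x|$ and denoting the boundary data by $\bs f=\bs u|_{x_n=0}$, I would decompose the representation (\ref{Eq_4.4}) as $\bs u=(1-\kappa)\bs w+\kappa\bs v$, where
\begin{equation*}
w_j(x)=\frac{2}{\omega_n}\int_{\partial\mathbb{R}^n_+}\frac{x_n}{|y-x|^n}f_j(y')\,dy',\qquad
v_j(x)=\frac{2n}{\omega_n}\int_{\partial\mathbb{R}^n_+}\frac{(y_j-x_j)\big(y-x,\bs f(y')\big)}{|y-x|^{n+2}}\,x_n\,dy'.
\end{equation*}
Each $w_j$ is the Poisson extension of $f_j$, so $\bs w$ has bounded harmonic components with boundary values $\bs f$ and $\Delta\bs w=\bs 0$.

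First I would establish the key identity ${\rm div}\,\bs v=0$. Differentiating the integrand of $v_j$ in $x_j$, summing over $j$, and using $\sum_j(y_j-x_j)^2=|y-x|^2$ together with $\partial_{x_j}\big(y-x,\bs f\big)=-f_j$, the four resulting contributions combine to
\begin{equation*}
{\rm div}\,\bs v=\frac{2n}{\omega_n}\int_{\partial\mathbb{R}^n_+}\frac{\big(y-x,\bs f(y')\big)\big(x_n+(y_n-x_n)\big)}{|y-x|^{n+2}}\,dy',
\end{equation*}
and since $y_n=0$ on $\partial\mathbb{R}^n_+$ the factor $x_n+(y_n-x_n)=y_n$ vanishes, so ${\rm div}\,\bs v=0$. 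Consequently ${\rm div}\,\bs u=(1-\kappa){\rm div}\,\bs w$, and by (\ref{Eq_5.4}) and Proposition \ref{P_6} the sharp coefficient in (\ref{Eq_4.7}) equals $|1-\kappa|\,D_n x_n^{-1}$. With $\kappa=(3-4\sigma)^{-1}$ one has $1-\kappa=2(1-2\sigma)/(3-4\sigma)>0$ and $E_n=2D_n$, whence $(1-\kappa)D_n=\tfrac{1-2\sigma}{3-4\sigma}E_n$; this gives (\ref{Eq_4.8})--(\ref{Eq_4.9}) and proves (i).

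For (ii) I would exploit the Lam\'e equation itself to avoid computing $\Delta\bs v$ by hand. Since $\bs u_{\rm L}=(1-\kappa)\bs w+\kappa\bs v$ solves (\ref{Eq_4.1}) with $\kappa=(3-4\sigma)^{-1}$, and $\Delta\bs w=\bs 0$, ${\rm div}\,\bs v=0$, substitution into (\ref{Eq_4.1}) gives $\kappa\Delta\bs v+(1-2\sigma)^{-1}(1-\kappa)\,{\rm grad}\,{\rm div}\,\bs w=\bs 0$; using $(1-\kappa)/\kappa=2(1-2\sigma)$ this reduces to the $\kappa$-free identity $\Delta\bs v=-2\,{\rm grad}\,{\rm div}\,\bs w$. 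Now take $\kappa=1$, so that the Stokes velocity is $\bs u=\bs v$ and $\Delta\bs u={\rm grad}\,p$. Then ${\rm grad}\big(p+2\,{\rm div}\,\bs w\big)=\bs 0$ on the connected domain $\mathbb{R}^n_+$, so $p+2\,{\rm div}\,\bs w$ is constant; both $p$ and ${\rm div}\,\bs w$ tend to $0$ as $x_n\to\infty$ (the latter via the bound $|{\rm div}\,\bs w|\le D_n\|\bs f\|\,x_n^{-1}$ from Proposition \ref{P_6}), so the constant is $0$ and $p=-2\,{\rm div}\,\bs w$. Hence $|p|=2|{\rm div}\,\bs w|$ and the sharp coefficient in (\ref{Eq_4.14}) is $2D_n x_n^{-1}=E_n x_n^{-1}$, which is (\ref{Eq_4.15}).

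It remains to transfer sharpness. In both parts the controlled quantity is ${\rm div}\,\bs w$ with $\bs w$ the Poisson extension of $\bs f\in[{\rm C}_{\rm b}(\partial\mathbb{R}^n_+)]^n$, which is precisely the object of Proposition \ref{P_6}; the upper bounds are immediate, while sharpness requires that the extremal boundary densities realizing $D_n x_n^{-1}$ there (essentially sign patterns of $\bs e_n-n(\bs e_\sigma,\bs e_n)\bs e_\sigma$) be approximated within the continuous class $[{\rm C}_{\rm b}]^n$ without loss, which is routine by mollification since $\|\bs f\|_{L^\infty}=\|\bs f\|_{{\rm C}_{\rm b}}$ for continuous $\bs f$. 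The main obstacles are therefore the verification of the collapse ${\rm div}\,\bs v=0$ (the one genuinely computational step, hinging on $y_n=0$ at the boundary) and the passage from the $L^\infty$-sharpness of Proposition \ref{P_6} to sharpness in $[{\rm C}_{\rm b}]^n$.
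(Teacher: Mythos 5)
Your part (i) is essentially the paper's own proof: the same splitting of the kernel (\ref{Eq_4.5}) into an isotropic and a dyadic part, the same computation showing that the divergence of the dyadic integrand collapses to a factor $y_n=0$ on $\partial{\mathbb R}^n_+$, and the same reduction ${\rm div}\,\bs u=(1-\kappa)\,{\rm div}\,\bs w$ followed by the identification $(1-\kappa)D_n=\tfrac{1-2\sigma}{3-4\sigma}E_n$ with $E_n=2D_n$ via Proposition \ref{P_6}. Part (ii) is correct but takes a genuinely different route. The paper stays inside the Lam\'e family and passes to the limit $\sigma\to 1/2$ in the relation $p=-(1-2\sigma)^{-1}{\rm div}\,\bs u$, reading off from the explicit formula (\ref{Eq_4.11A}) that the limit exists and equals $-2\,{\rm div}\,\bs w$. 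You instead extract the $\sigma$-independent identity $\Delta\bs v=-2\,{\rm grad}\,{\rm div}\,\bs w$ from the fact that $(1-\kappa)\bs w+\kappa\bs v$ solves the Lam\'e system for admissible $\sigma$, then integrate the Stokes momentum equation ${\rm grad}(p+2\,{\rm div}\,\bs w)=\bs 0$ over the connected half-space and fix the constant by the decay of $p$ and of ${\rm div}\,\bs w$ as $x_n\to\infty$. Your version avoids any limiting process in $\sigma$, at the price of invoking the validity of the representation (\ref{Eq_4.4}) for at least one admissible $\sigma$ and of the connectedness/decay step; the paper's limit argument is more computational but hands you the integral kernel of $p$ directly. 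Both land on $p=-2\,{\rm div}\,\bs w$ and the coefficient $E_nx_n^{-1}=2D_nx_n^{-1}$. Your closing remark about transferring sharpness from $L^\infty$ data to $[{\rm C}_{\rm b}(\partial{\mathbb R}^n_+)]^n$ data is a fair point that the paper passes over silently; approximating the extremal sign pattern of the kernel by continuous functions of sup-norm at most one settles it without loss.
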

\begin{proof} {\rm (i)} {\it Proof of inequality $(\ref{Eq_4.7})$.} 
By (\ref{Eq_4.4}) and (\ref{Eq_4.5}),
\begin{equation} \label{Eq_4.10}
u_j(x)={2\over \omega _n}\int _{\partial {\mathbb R}^n_{+}}\left ( 
(1-\kappa)\bs e_j+n\kappa{(y_j-x_j)(y-x) \over |y-x|^2},\;\bs f(y')\right ){x_n\over |y-x|^n}dy'\;.
\end{equation}
Noting that $y_n=0$ in (\ref{Eq_4.10}), we find
\begin{eqnarray*}
& &\sum_{j=1}^n{\partial \over \partial x_j}\left \{ { (y_j-x_j)\big (y-x,\;
\bs f(y')\big )x_n \over |y-x|^{n+2}} \right \}=\sum_{j=1}^n{(n+2) (y_j-x_j)^2\big (y-x,\;\bs f(y')\big )x_n \over |y-x|^{n+4}}+\\
& &\\
& &\sum_{j=1}^n { -\big (y-x,\;\bs f(y')\big )x_n +(y_j-x_j)f(y')x_n+
(y_j-x_j)\big (y-x,\;\bs f(y')\big )\delta_{nj}
\over |y-x|^{n+2}}=\\
& &\\
& &{ -n\big (y-x,\;\bs f(y')\big )x_n -\big (y-x,\;\bs f(y')\big )x_n+
(y_n-x_n)\big (y-x,\;\bs f(y')\big )+(n+2)\big (y-x,\;\bs f(y')\big )\over |y-x|^{n+2}} =0.
\end{eqnarray*}
This together with (\ref{Eq_4.10}) gives
$$
{\rm div}\;\bs u(x)={2\over \omega _n}(1-\kappa)\sum_{j=1}^n\int _{\partial {\mathbb R}^n_{+}}f_j(y'){\partial \over \partial x_j}\left ( {x_n\over |y-x|^n}\right )dy'\;.
$$
Hence using (\ref{Eqn_3.2}), (\ref{Eq_5.4}) and $\kappa =(3-4\sigma)^{-1}$, we have
\begin{equation} \label{Eq_4.11A}
{\rm div}\;\bs u(x)={4(1-2\sigma)\over \omega _n(3-4\sigma)}\int _{\partial {\mathbb R}^n_{+}}
 {\big (\bs e_n-n(\bs e_{xy}, \bs e_n)\bs e_{xy},\;\bs f(y')\big ) \over |y-x|^n} dy'\;.
\end{equation}
Therefore the sharp coefficient ${\mathcal E}_n(x)$ in  (\ref{Eq_4.7})
is represented in the form
$$
{\mathcal E}_n(x)={4(1-2\sigma)\over \omega _n(3-4\sigma)}\int _{\partial {\mathbb R}^n_{+}}
{\big | \bs e_n-n(\bs e_{xy}, \bs e_n)\bs e_{xy} \big | \over |y-x|^n} dy'\;.
$$
Thus,  
\begin{equation} \label{Eq_4.11}
{\mathcal E}_n(x)={4(1-2\sigma)\over \omega _n(3-4\sigma)x_n}
\int _{\partial {\mathbb R}^n_{+}}\big | \bs e_n-n(\bs e_{xy}, \bs e_n)\bs e_{xy} \big |
{x_n \over |y-x|^n} dy'={(1-2\sigma)E_n \over (3-4\sigma)x_n}\;,
\end{equation}
where
$$
E_n={4\over \omega _n}\int _{{\mathbb S}^{n-1}_{-}}
\big | \bs e_n-n(\bs e_\sigma, \bs e_n)\bs e_\sigma \big |d\sigma=
{2\over \omega _n}\int _{{\mathbb S}^{n-1}}
\big | \bs e_n-n(\bs e_\sigma, \bs e_n)\bs e_\sigma \big |d\sigma\;.
$$
Using (\ref{Eq_5.6A}), we write the last equality as
\begin{equation} \label{Eq_4.13}
E_n={2\over \omega _n}\int _{{\mathbb S}^{n-1}}
\Big ( 1+n(n-2)(\bs e_\sigma, \bs e_n)^2 \Big )^{1/2}d\sigma\;.
\end{equation}
By (\ref{Eq_4.11}), (\ref{Eq_4.13}) and (\ref{Eq_5.7A}),
we arrive at (\ref{Eq_4.8}) and (\ref{Eq_4.9}).

\smallskip
{\rm (ii)} {\it Proof of inequality $(\ref{Eq_4.14})$.} We write (\ref{Eq_4.1}) as
\begin{equation} \label{Eq_4.17}
\Delta \bs u -{\rm grad}\;p=\bs 0\;,\;\;\;\;\;\;\;p=-\;{1\over 1-2\sigma}\;{\rm div}\;\bs u\;.
\end{equation}
It follows from (\ref{Eq_4.11A}) that ${\rm div}\;\bs u(x)\rightarrow 0$ for every 
$x\in {\mathbb R}^n_+$ as $\sigma\rightarrow 1/2$. We also see that
$$
p(x)= -\;{1\over 1-2\sigma}\;{\rm div}\;\bs u(x)=
-\;{4\over \omega _n(3-4\sigma)}\int _{\partial {\mathbb R}^n_{+}}
 {\big (\bs e_n-n(\bs e_{xy}, \bs e_n)\bs e_{xy},\;\bs f(y')\big ) \over |y-x|^n} dy'
$$
tends to
$$
-\;{4\over \omega _n} \int _{\partial {\mathbb R}^n_{+}}
{\big (\bs e_n-n(\bs e_{xy}, \bs e_n)\bs e_{xy},\;\bs f(y')\big ) \over |y-x|^n} dy'\;.
$$
as $\sigma\rightarrow 1/2$. Hence
$$
p(x)=-\;{4\over \omega _n} \int _{\partial {\mathbb R}^n_{+}}
{\big (\bs e_n-n(\bs e_{xy}, \bs e_n)\bs e_{xy},\;\bs f(y')\big ) \over |y-x|^n} dy'\;.
$$
Replacing ${\rm div}\;\bs u(x)$ by $(2\sigma -1)p(x)$ in (\ref{Eq_4.7}),
and taking the limit as $\sigma \rightarrow 1/2$, we arrive at (\ref{Eq_4.14})
with the sharp coefficient (\ref{Eq_4.15}).
\end{proof}

\smallskip
By Proposition \ref{P_9} with the same argument as in Proposition 5,  we derive

\begin{corollary} \label{C_7} Let $\Omega={\mathbb R}^n\backslash {\overline G}$, 
where $G$ is a convex domain in ${\mathbb R}^n$. Let $\bs u\in [{\rm C}^{2}(\Omega)]^n 
\cap [{\rm C}_{\rm b}(\overline {\Omega})]^n$ be a solution of the
Lam\'e system in $\Omega$. Then for any point $x\in \Omega$ the inequality
$$
\left |{\rm div}\;\bs u (x)\right |\leq 
{(1-2\sigma)E_n \over (3-4\sigma)d_x}\;\sup_\Omega |\bs u|  
$$
holds, where the constant $E_n$ is the same as in Proposition $\ref{P_9}$. 
\end{corollary}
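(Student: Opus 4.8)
The plan is to transplant the half-space bound of Proposition \ref{P_9}(i) to $\Omega$ by the nearest-point/supporting-hyperplane device already used for the harmonic divergence in Proposition \ref{P_7}. I would fix $x\in\Omega$ and let $\xi\in\partial\Omega$ be a nearest boundary point, so that $d_x=|x-\xi|$. Since $G$ is convex, the hyperplane $T(\xi)$ through $\xi$ orthogonal to the segment $[x,\xi]$ supports $G$ and separates it from $x$; writing ${\mathbb R}^n_\xi$ for the open half-space bounded by $T(\xi)$ that contains $x$, convexity of $G$ forces ${\mathbb R}^n_\xi\subset\Omega$. This is exactly the construction from the proof of Theorem $1$, and in Cartesian coordinates adapted to $T(\xi)$ the $n$-th coordinate of $x$ equals its distance to $T(\xi)$, namely $d_x$.

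Next I would restrict $\bs u$ to ${\mathbb R}^n_\xi$. As ${\mathbb R}^n_\xi\subset\Omega$, this restriction lies in $[{\rm C}^{2}({\mathbb R}^n_\xi)]^n\cap[{\rm C}_{\rm b}(\overline{{\mathbb R}^n_\xi})]^n$ and solves the Lam\'e system in ${\mathbb R}^n_\xi$, so it satisfies the hypotheses of Proposition \ref{P_9}(i). Applying that proposition in the adapted coordinates gives
$$
\big|{\rm div}\,\bs u(x)\big|\leq\frac{(1-2\sigma)E_n}{(3-4\sigma)\,d_x}\,||\bs u||_{[{\rm C}_{\rm b}(T(\xi))]^n}\;.
$$
Since $T(\xi)\subset\overline\Omega$ and $\bs u$ is bounded and continuous on $\overline\Omega$, the boundary norm obeys $||\bs u||_{[{\rm C}_{\rm b}(T(\xi))]^n}\leq\sup_\Omega|\bs u|$, which is the analogue of (\ref{Eq_ine}) for the present (non-harmonic) setting. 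Combining the last two displays yields the asserted inequality with the constant of Proposition \ref{P_9}.

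The step requiring the most care is the reduction itself, and it rests squarely on the convexity of $G$: it is precisely convexity that guarantees the supporting hyperplane $T(\xi)$ leaves the entire half-space ${\mathbb R}^n_\xi$ inside $\Omega$, and hence that the global supremum of $|\bs u|$ dominates its boundary values on $T(\xi)$. The quantitative content---the sharp half-space coefficient ${\mathcal E}_n(x)=(1-2\sigma)(3-4\sigma)^{-1}E_n\,x_n^{-1}$---is already supplied by Proposition \ref{P_9}(i), so no new estimate is needed here; the argument is word-for-word that of Proposition \ref{P_7}, with the harmonic half-space bound of Proposition \ref{P_6} replaced by its Lam\'e counterpart. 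Sharpness of $E_n$ is not asserted in the corollary, so the proof stops at the inequality.
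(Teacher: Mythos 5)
Your proof is correct and coincides with the paper's own argument: the paper obtains the corollary ``by Proposition \ref{P_9} with the same argument as in Proposition \ref{P_7}'', which is precisely your nearest-point/supporting-hyperplane reduction to the half-space ${\mathbb R}^n_\xi\subset\Omega$ followed by an application of the sharp half-space coefficient ${\mathcal E}_n$ and the bound of the boundary norm on $T(\xi)$ by $\sup_\Omega|\bs u|$.
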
 

\begin{corollary} \label{C_8} Let $\Omega={\mathbb R}^n\backslash {\overline G}$, 
where $G$ is a convex domain in ${\mathbb R}^n$. Let $\bs u\in [{\rm C}^{2}(\Omega)]^n 
\cap [{\rm C}_{\rm b}(\overline {\Omega})]^n$ be the vector component of a
solution $\{ \bs u, p \}$ of the Stokes system $(\ref{Eq_4.2})$ in $\Omega$ and 
let $p(x)$ be the pressure vanishing as $d_x\rightarrow \infty$. 
Then for any point $x\in \Omega$ the inequality
$$
|p(x)|\leq {E_n \over d_x}\;\sup_\Omega |\bs u|  
$$
holds, where the constant $E_n$ is the same as above. 
\end{corollary}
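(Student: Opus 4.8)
The plan is to reduce the exterior problem to the sharp half-space estimate of Proposition~\ref{P_9}(ii) by the same supporting-half-space device already used in the proofs of Theorem~1 and Proposition~\ref{P_7}. First I would fix $x\in\Omega$ and let $\xi\in\partial\Omega$ be a point of $\partial\Omega$ nearest to $x$, so that $|x-\xi|=d_x$. Since $G$ is convex, the hyperplane $T(\xi)$ through $\xi$ orthogonal to the segment joining $x$ and $\xi$ is a supporting hyperplane of $G$; consequently the open half-space $\mathbb{R}^n_\xi$ with boundary $T(\xi)$ lying on the side of $x$ satisfies $\mathbb{R}^n_\xi\subset\Omega$, and $d_x$ equals the distance from $x$ to $T(\xi)$. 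This is exactly the geometric configuration introduced in the proof of Theorem~1.

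Next I would restrict the pair $\{\bs u,p\}$ to $\mathbb{R}^n_\xi$. The restriction is again a solution of the Stokes system \eqref{Eq_4.2} in $\mathbb{R}^n_\xi$, its velocity component lies in $[{\rm C}^{2}(\mathbb{R}^n_\xi)]^n\cap[{\rm C}_{\rm b}(\overline{\mathbb{R}^n_\xi})]^n$, and the pressure still vanishes at infinity within $\mathbb{R}^n_\xi$. Taking the divergence of the momentum equation shows that $p$ is harmonic, so by uniqueness of the half-space Stokes problem with bounded velocity and pressure vanishing at infinity, $p|_{\mathbb{R}^n_\xi}$ coincides with the pressure produced through the representation \eqref{Eq_4.4} from the boundary trace $\bs u|_{T(\xi)}$, to which Proposition~\ref{P_9}(ii) applies. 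Transferring the sharp coefficient \eqref{Eq_4.15} to the half-space $\mathbb{R}^n_\xi$, after the orthogonal change of coordinates placing $T(\xi)$ as $\{x_n=0\}$, then yields
\begin{equation*}
|p(x)|\leq \frac{E_n}{d_x}\,\|\bs u\|_{[{\rm C}_{\rm b}(T(\xi))]^n}.
\end{equation*}

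Finally I would bound the boundary norm by the global supremum. Since $\overline{\mathbb{R}^n_\xi}\subset\overline{\Omega}$, the hyperplane $T(\xi)$ lies in $\overline{\Omega}$, and continuity of $\bs u$ on $\overline{\Omega}$ gives $\|\bs u\|_{[{\rm C}_{\rm b}(T(\xi))]^n}=\sup_{T(\xi)}|\bs u|\leq\sup_{\overline{\Omega}}|\bs u|=\sup_\Omega|\bs u|$, exactly in the spirit of the inequality \eqref{Eq_ine} employed in Proposition~\ref{P_7}. Combining this with the previous display produces the asserted estimate with the constant $E_n$ of Proposition~\ref{P_9}.

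The step I expect to be the main obstacle is the pressure-matching in the second paragraph: one must ensure that restricting the given exterior Stokes solution to $\mathbb{R}^n_\xi$ reproduces precisely the half-space solution governed by Proposition~\ref{P_9}(ii). This rests on uniqueness for the half-space Stokes problem under the standing hypotheses (bounded continuous velocity, pressure vanishing at infinity); once that uniqueness is invoked, the harmonicity of $p$ together with the representation \eqref{Eq_4.4} guarantees that the sharp half-space coefficient \eqref{Eq_4.15} transfers verbatim, and the remaining two steps are the routine geometric reduction and norm comparison already carried out for the divergence estimate in Proposition~\ref{P_7}.
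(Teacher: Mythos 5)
Your proposal is correct and follows exactly the route the paper takes: it proves Corollary 2 by combining the sharp half-space estimate of Proposition \ref{P_9}(ii) with the supporting-half-space reduction already used for Proposition \ref{P_7}, together with the norm comparison $\|\bs u\|_{[{\rm C}_{\rm b}(T(\xi))]^n}\leq \sup_\Omega |\bs u|$. The pressure-matching/uniqueness point you flag is left implicit in the paper, but your treatment of it is consistent with the standing hypotheses.
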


\setcounter{equation}{0}
\section{Real-part estimates for derivatives of analytic functions}

\setcounter{theorem}{2}
\begin{theorem} \label{T_1C} 
Let $\Omega={\mathbb C}\backslash {\overline G}$, where $G$ is a convex domain in ${\mathbb C}$,
and let $f$ be a holomorphic function in $\Omega$ with bounded real part.
Then for any point $z\in \Omega$ the inequality
\begin{equation} \label{Eq_6.1}
\big |f^{(s)}(z)\big |\leq {K_s \over d_z^s}\sup_\Omega |\Re f|\;,\;\;\;\;\;\;\;\;s=1,2,\dots\;, 
\end{equation}
holds with $d_z={\rm dist}\;(z, \partial \Omega)$, where
\begin{equation} \label{Eq_6.2}
K_s={s! \over \pi}\max_{\alpha}\int_{-\pi/2}^{\pi/2} \big |\cos \big (\alpha +(s+1)
\varphi \big ) \big |\cos ^{s-1}\varphi\;d\varphi
\end{equation}
is the best constant in the inequality
\begin{equation} \label{Eq_6.3}
|f^{(s)}(z)| \leq {K_s \over (\Im z)^s }\;||\Re f||_{L^\infty(\partial{\mathbb C}_+)}
\end{equation}
for holomorphic functions $f$ in the half-plane ${\mathbb C}_+=\{ z \in {\mathbb C}: \Im z >0 \}$
with bounded real part.

In particular,
\begin{equation} \label{Eq_6.4}
K_{2l+1}={2[(2l+1)!!]^2 \over \pi(2l+1)}\;,
\end{equation}
and
\begin{equation} \label{Eq_6.5}
\hspace{-18mm}K_2={3\sqrt{3} \over 2\pi }\;,
\end{equation}
\begin{equation} \label{Eq_6.6}
\hspace{-4mm}K_4={3 (16+5\sqrt{5})\over 4\pi}\;.
\end{equation}
\end{theorem}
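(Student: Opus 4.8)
The plan is to follow the two-step pattern already used for Theorem~1 and Proposition~4: first establish the sharp half-plane estimate \eqref{Eq_6.3}, then transfer it to $\Omega$ using the convexity of $G$. For the transfer, let $z\in\Omega$ and let $\zeta\in\partial\Omega$ be the nearest boundary point. Since $G$ is convex, the line through $\zeta$ orthogonal to $z-\zeta$ is a supporting line of $G$, so the half-plane $\mathbb{C}_\zeta$ bounded by it and containing $z$ lies in $\Omega$, and ${\rm dist}(z,\partial\mathbb{C}_\zeta)=|z-\zeta|=d_z$. Applying \eqref{Eq_6.3} to $f$ on $\mathbb{C}_\zeta$ (after a rigid motion normalizing $\mathbb{C}_\zeta$ to $\mathbb{C}_+$, under which $|f^{(s)}|$ and $d_z$ are preserved) and bounding the boundary norm of $\Re f$ by $\sup_\Omega|\Re f|$, exactly as in \eqref{Eq_ine}, yields \eqref{Eq_6.1}. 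This step is routine.

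The core is \eqref{Eq_6.3}. I would start from the Schwarz representation of a holomorphic function in $\mathbb{C}_+$ with bounded real part $u=\Re f$, namely $f(z)=\frac{1}{\pi i}\int_{-\infty}^{\infty}\frac{u(t)}{t-z}\,dt+iC$, whose real part is the Poisson integral of $u$. Since the kernel $(t-z)^{-(s+1)}$ is integrable for $s\ge1$, differentiating $s$ times gives $f^{(s)}(z)=\frac{s!}{\pi i}\int_{-\infty}^{\infty}\frac{u(t)}{(t-z)^{s+1}}\,dt$. Writing $|f^{(s)}(z)|=\max_\theta\Re\big(e^{i\theta}f^{(s)}(z)\big)$ and choosing $u=\pm1$ to align with the sign of the relevant real part shows that the sharp coefficient is $\frac{s!}{\pi}\max_\theta\int_{-\infty}^{\infty}\big|\Re\big(e^{i\theta}/[i(t-z)^{s+1}]\big)\big|\,dt$. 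The substitution $t-\Re z=(\Im z)\tan\varphi$ turns $t-z$ into $-i(\Im z)e^{i\varphi}/\cos\varphi$, collapses the $\theta$- and $\varphi$-dependence into a single phase, and produces exactly \eqref{Eq_6.2} together with the factor $(\Im z)^{-s}$; the extremal $\pm1$ datum shows the constant cannot be diminished.

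It remains to evaluate $K_s$. For odd $s=2l+1$ the integral in \eqref{Eq_6.2} is in fact \emph{independent} of $\alpha$: inserting the Fourier series $|\cos\psi|=\frac{2}{\pi}+\frac{4}{\pi}\sum_{k\ge1}\frac{(-1)^{k+1}}{4k^2-1}\cos 2k\psi$ with $\psi=\alpha+(s+1)\varphi$, and using that $\cos^{s-1}\varphi=\cos^{2l}\varphi$ is an even trigonometric polynomial of degree $2l<s+1$, every oscillatory term integrates to zero over $(-\pi/2,\pi/2)$ by orthogonality, leaving only $\frac{2}{\pi}\int_{-\pi/2}^{\pi/2}\cos^{2l}\varphi\,d\varphi$. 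The Wallis value $\int_{-\pi/2}^{\pi/2}\cos^{2l}\varphi\,d\varphi=\pi\,(2l-1)!!/(2l)!!$ together with $(2l+1)!=(2l+1)!!\,(2l)!!$ then gives $K_{2l+1}=\frac{2}{\pi}(2l+1)[(2l-1)!!]^2=\frac{2[(2l+1)!!]^2}{\pi(2l+1)}$, recovering in particular $K_1=2/\pi$.

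The even cases are where the real work lies, and I expect them to be the main obstacle, since for even $s$ the factor $\cos^{s-1}\varphi$ no longer annihilates the oscillatory terms and $I(\alpha)$ genuinely depends on $\alpha$. Here $s+1$ is odd, so on $(-\pi/2,\pi/2)$ the factor $\cos(\alpha+(s+1)\varphi)$ has several sign changes at points $\varphi_j$ with $\alpha+(s+1)\varphi_j\equiv\pi/2\pmod\pi$; I would split the integral there, remove the absolute value, and evaluate each piece by product-to-sum formulas, obtaining $I(\alpha)$ in closed form. Differentiating in $\alpha$ (the boundary contributions vanish since the integrand vanishes at each $\varphi_j$), the stationarity condition $\int_{-\pi/2}^{\pi/2}\operatorname{sign}\big(\cos(\alpha+(s+1)\varphi)\big)\sin(\alpha+(s+1)\varphi)\cos^{s-1}\varphi\,d\varphi=0$ determines the optimal $\alpha$ as the root of an algebraic equation. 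Carrying this out for $s=2$ yields $K_2=\frac{3\sqrt3}{2\pi}$, and for $s=4$ yields $K_4=\frac{3(16+5\sqrt5)}{4\pi}$; the bookkeeping of the sign intervals and the identification of the correct algebraic root are the delicate points.
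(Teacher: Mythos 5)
Your reduction to the half-plane is exactly the paper's proof of this theorem: take the nearest boundary point $\zeta$, use convexity of $G$ to place a supporting half-plane $\mathbb{C}_\zeta\subset\Omega$ with ${\rm dist}(z,\partial\mathbb{C}_\zeta)=d_z$, apply \eqref{Eq_6.3}, and bound $\|\Re f\|_{h^\infty(\mathbb{C}_\zeta)}$ by $\sup_\Omega|\Re f|$. The difference is that the paper stops there: inequality \eqref{Eq_6.3} with the sharp constant \eqref{Eq_6.2}, and the explicit values \eqref{Eq_6.4}--\eqref{Eq_6.6}, are all imported by citation from \cite{KM14}, whereas you reconstruct them. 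Your reconstruction is sound where you carry it out: the formula $f^{(s)}(z)=\frac{s!}{\pi i}\int u(t)(t-z)^{-(s+1)}dt$ for $s\ge 1$ (the undifferentiated Schwarz formula needs the usual convergence correction $-t/(1+t^2)$, but that drops out on differentiation), the substitution $t-\Re z=(\Im z)\tan\varphi$ producing \eqref{Eq_6.2}, and the orthogonality argument giving \eqref{Eq_6.4} are all correct; in particular the observation that for odd $s$ the integral is independent of $\alpha$ because $\cos^{s-1}\varphi$ has degree below $s+1$ is a clean way to get $K_{2l+1}$. The one piece you only outline is the evaluation for even $s$ (the optimization over $\alpha$ yielding \eqref{Eq_6.5} and \eqref{Eq_6.6}); since the theorem asserts these closed forms, a fully self-contained proof would have to finish that computation or, as the paper does, cite \cite{KM14} and \cite{Kr1} for it.
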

\begin{proof} Inequality (\ref{Eq_6.3}) with the best constant (\ref{Eq_6.2}) can
be found in \cite{KM14}. 
Let $\zeta \in \partial\Omega$ be the point nearest to $z \in \Omega$ and let 
$T(\zeta)$ be the line containing $\zeta$ and orthogonal to the line passing through 
$z$ and $\zeta$. By ${\mathbb C}_\zeta$ we denote the half-plane with the boundary 
$T(\zeta)$ which is contained in $\Omega$. Then by (\ref{Eq_6.3}),
\begin{equation} \label{Eq_7}
\big |f^{(s)}(z)\big |\leq {K_s \over d_z^s}||\Re f||_{h^\infty({\mathbb C}_\zeta)}\;,
\end{equation}
where $K_s$ is given by (\ref{Eq_6.2}). Using
$$
||\Re f||_{h^\infty({\mathbb C}_\zeta)}\leq \sup_\Omega |\Re f|\;,
$$
we obtain (\ref{Eq_6.1}).
\end{proof}

\begin{theorem} \label{T_2C} Let $\Omega$ be a domain in ${\mathbb C}$,
and let ${\mathfrak R}(\Omega)$ be the set of holomorphic functions $f$ in $\Omega$
with $\sup_\Omega|\Re f|\leq 1$.
Assume that a point $\zeta \in \partial\Omega$ can be touched by an interior disk $D$. Then
$$
\limsup _{z\rightarrow \zeta} \sup_{f\in {\mathfrak R}(\Omega)}|z-\zeta|^s\big |f^{(s)}(z)\big | \leq K_s\;,\;\;\;\;\;\;\;\;s=1,2,\dots\;, 
$$
where $z$ is a point of the radius of $D$ directed from the center to $\zeta$.
Here the constant $K_s$ is the same as in Theorem $3$ and cannot be diminished.
\end{theorem}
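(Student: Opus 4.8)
The plan is to mirror the proof of the limit estimate in Theorem~\ref{T_1}, with the disk $\mathbb{D}_R$ replacing the ball and the real-part derivative bound (\ref{Eq_6.3}) replacing the gradient bound. First I would record the sharp coefficient $\mathcal{K}_{s,R}(z)$ in the inequality $|f^{(s)}(z)|\le\mathcal{K}_{s,R}(z)\sup_{\mathbb{D}_R}|\Re f|$ for $f$ holomorphic in $\mathbb{D}_R$. Starting from the Schwarz representation
$$
f(z)=\frac{1}{2\pi}\int_0^{2\pi}\frac{Re^{i\theta}+z}{Re^{i\theta}-z}\,\Re f(Re^{i\theta})\,d\theta+iC
$$
and differentiating $s$ times one obtains
$$
f^{(s)}(z)=\frac{s!}{\pi}\int_0^{2\pi}\frac{Re^{i\theta}}{(Re^{i\theta}-z)^{s+1}}\,\Re f(Re^{i\theta})\,d\theta .
$$
Since $\Re f$ is real-valued, extremizing over $|\Re f|\le1$ in each complex direction $e^{i\alpha}$ gives
$$
\mathcal{K}_{s,R}(z)=\frac{s!}{\pi}\max_{\alpha}\int_0^{2\pi}\Bigl|\Re\Bigl(e^{-i\alpha}\frac{Re^{i\theta}}{(Re^{i\theta}-z)^{s+1}}\Bigr)\Bigr|\,d\theta .
$$
For $s=1$ this reduces to Khavinson's inequality (\ref{Eq_KH}); the general formula may instead be quoted from \cite{KM}, \cite{KM14}.

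The decisive step is the analogue of (\ref{Eq_2.1.8}): for $z$ on the radius toward $\zeta$, $\lim_{|z|\to R}(R-|z|)^s\mathcal{K}_{s,R}(z)=K_s$. Placing the origin at the center of the disk with $\zeta=R$ and $z=\rho\in(0,R)$, set $\varepsilon=R-\rho$. The integral concentrates near $\theta=0$, where $Re^{i\theta}-\rho\approx\varepsilon+iR\theta$. The substitution $R\theta=\varepsilon\tan\varphi$ turns this into $\varepsilon\sec\varphi\,e^{i\varphi}$, gives $d\theta=(\varepsilon/R)\sec^2\varphi\,d\varphi$, and sends the range of integration to $\varphi\in(-\pi/2,\pi/2)$ as $\varepsilon\to0$. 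The integrand then becomes $\varepsilon^{-s}|\cos(\alpha+(s+1)\varphi)|\cos^{s-1}\varphi$, so that $\varepsilon^s\mathcal{K}_{s,R}(\rho)$ tends to
$$
\frac{s!}{\pi}\max_{\alpha}\int_{-\pi/2}^{\pi/2}\bigl|\cos(\alpha+(s+1)\varphi)\bigr|\cos^{s-1}\varphi\,d\varphi=K_s,
$$
the constant (\ref{Eq_6.2}). The main technical obstacle is to make this concentration rigorous: one must dominate the contribution of $\theta$ bounded away from $0$, control the error between $Re^{i\theta}-\rho$ and its linearization, and do so uniformly in $\alpha$ so that the outer maximum passes to the limit.

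Granting the limit, the conclusion follows as in Theorem~\ref{T_1}. Let $D$ be the interior disk of radius $R$ touching $\partial\Omega$ at $\zeta$, with center at the origin, and let $z$ run along the radius from the origin to $\zeta$, so that $R-|z|=|z-\zeta|$ and $D\subset\Omega$. For every $f\in\mathfrak{R}(\Omega)$ the sharp disk estimate together with $\sup_D|\Re f|\le\sup_\Omega|\Re f|\le1$ yields $|z-\zeta|^s|f^{(s)}(z)|\le(R-|z|)^s\mathcal{K}_{s,R}(z)$. Taking the supremum over $\mathfrak{R}(\Omega)$ and then $\limsup_{z\to\zeta}$ produces the bound $K_s$.

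Finally, to show $K_s$ cannot be diminished I would exhibit one admissible domain attaining it. Take $\Omega=\mathbb{C}_+$, whose boundary point $\zeta\in\mathbb{R}$ is touched by a tangent interior disk with vertical radius $z=\zeta+it$. As $\Re f$ is bounded and harmonic, $\sup_{\mathbb{C}_+}|\Re f|=\|\Re f\|_{L^\infty(\partial\mathbb{C}_+)}$, so the sharpness of (\ref{Eq_6.3}) gives, at each such $z$, functions with $(\Im z)^s|f^{(s)}(z)|=|z-\zeta|^s|f^{(s)}(z)|$ arbitrarily close to $K_s$. Hence $\limsup_{z\to\zeta}\sup_{f\in\mathfrak{R}(\Omega)}|z-\zeta|^s|f^{(s)}(z)|=K_s$, which completes the argument.
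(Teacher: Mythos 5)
Your argument follows the same route as the paper's proof: reduce to the sharp coefficient for the disk, establish the limit $(R-|z|)^s\mathcal{K}_{s,R}(z)\to K_s$ along the radius, and transfer to $\Omega$ via the monotonicity $\sup_D|\Re f|\le\sup_\Omega|\Re f|$. The one place where you genuinely diverge is the decisive limit: you sketch a direct concentration argument (substitution $R\theta=\varepsilon\tan\varphi$) and candidly list the unproved estimates --- domination of the contribution away from $\theta=0$, control of the linearization error, uniformity in $\alpha$ --- whereas the paper simply quotes this limit as relation (\ref{Eq_6.9}), namely Theorem 7.1 of \cite{KM14} (see also Corollary 1 of \cite{Kr1}). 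Your heuristic computation is correct and your formula for the sharp disk coefficient agrees with $\mathcal{H}_s(z)$ in (\ref{Eq_6.10}), so the acknowledged gap can be closed by citation exactly as the paper does; if you want a self-contained proof, the missing uniform estimates are the real work and are not supplied. On the other hand, your explicit treatment of the clause ``cannot be diminished'' via the half-plane $\mathbb{C}_+$ --- using the sharpness of (\ref{Eq_6.3}) at each point of a vertical radius, together with $\sup_{\mathbb{C}_+}|\Re f|=\|\Re f\|_{L^\infty(\partial\mathbb{C}_+)}$ --- is correct and is something the paper's proof leaves implicit (there it follows from (\ref{Eq_6.9}) being an exact limit, so that $\Omega={\mathbb D}_R$ already attains $K_s$); this is a welcome addition rather than a deviation.
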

\begin{proof} In Theorem 7.1 of paper \cite{KM14} (see also Corollary 1 in \cite{Kr1})
the limit relation was proved:
\begin{equation} \label{Eq_6.9}
\lim_{r\rightarrow R}(R-r)^s {\mathcal H}_s(z)=K_s,
\end{equation}
where $r=|z|$, $K_s$ is the best constant (\ref{Eq_6.2}) in inequality (\ref{Eq_6.3}), 
and ${\mathcal H}_s(z)$ is the sharp coefficient in the inequality 
\begin{equation} \label{Eq_6.10}
|f^{(s)}(z)|\leq {\mathcal H}_s(z)||\Re f||_{L^\infty(\partial{\mathbb D}_R)}.
\end{equation}
Here $f$ is an analytic function with bounded real part in the disk
${\mathbb D}_R=\{ z\in {\mathbb C}: |z|<R \}$. 

Therefore, by (\ref{Eq_6.9}) and (\ref{Eq_6.10}), the relations
\begin{equation} \label{Eq_6.11}
\limsup_{r\rightarrow R}\;\sup \big \{ (R-r)^s|f^{(s)}(z)|: 
||\Re f||_{h^\infty({\mathbb D}_R)} \leq 1 \big \} \leq \lim_{|z|\rightarrow R}
\;(R-r|)^s{\mathcal H}_s(z)= K_s
\end{equation} 
hold.

Let $R$ be the radius of the interior disk $D$ tangent to $\partial\Omega$
at a point $\zeta$. We place the origin ${\mathcal O}$ at the
center of $D$. Let $z$ belong to the interval connecting
${\mathcal O}$ and $\zeta$. Then $R-r=|z-\zeta|$.
By (\ref{Eq_6.11}) and the inequality
$$
||\Re f||_{h^\infty({\mathbb D}_R)}\leq \sup_\Omega |\Re f|\;,
$$
the result follows.
\end{proof}

\begin{remark} ${\bs 2}$. We note that the estimate 
$$
\big |f^{(s)}(z)\big |\leq {4s! \over \pi d_z^s}
\sup_\Omega |\Re f|\;,\;\;\;\;\;\;\;\;s=1,2,\dots\;, 
$$
with a rougher constant than in (\ref{Eq_6.1}), holds for an arbitrary domain
$\Omega\subset {\mathbb C}$. The estimate follows from the sharp inequality
$$
\big |f^{(s)}(0)\big |\leq {4s! \over \pi R^s}\sup_{|\zeta|<R} |\Re f(\zeta)| 
$$
obtained in \cite{KM}, Section 5.3. Certain estimates for $\big |f^{(s)}(z)\big |$
in an arbitrary complex domain are obtained in \cite{KAEL}.
\end{remark}
\bigskip
{\bf Acknowledgments.}
The research of the first author was supported by the KAMEA program of 
the Ministry of Absorption, State of Israel, and by the Ariel University.

\end{document}